\renewcommand\paragraph{\@startsection{paragraph}{4}{\z@}%
  {-3.25ex\@plus -1ex \@minus -.2ex}%
  {1.5ex \@plus .2ex}%
  {\normalfont\normalsize\itshape}}
\long\def\symbolfootnote[#1]#2{\begingroup%
\def\thefootnote{\fnsymbol{footnote}}\footnote[#1]{#2}\endgroup} 
\newcounter{assumptions}
\newtheorem{ass}[assumptions]{Assumption}%[section]
\begin{document}

\newtheorem{lem}{Lemma}[section]
\newtheorem{cor}[lem]{Corollary}%[section]
\newtheorem{prop}[lem]{Proposition}%[section]
\newtheorem{thm}[lem]{Theorem}%[section]
%\numberwithin{equation}{chapter}

\title[Non-asymptotic Error Bounds for Sequential MCMC]{Non-asymptotic Error Bounds for Sequential MCMC and Stability of Feynman-Kac Propagators}
\author{Nikolaus Schweizer}
\date{March 2012}

\address{Institute for Applied Mathematics, University of Bonn, Endenicher Allee 60, 53115 Bonn}

\email{nschweizer@iam.uni-bonn.de}
\thanks{Financial support of the German Research Foundation (DFG)  through the Hausdorff Center for Mathematics is gratefully acknowledged.}

\date{March 2012}

\subjclass[2000]{65C05, 60J10, 60B10, 47H20, 47D08}
\keywords{Markov Chain Monte Carlo, sequential Monte Carlo, importance sampling,
spectral gap, tempering, Feynman-Kac formula}

\begin{abstract}
We provide a generic way of deducing non-asymptotic error bounds for Sequential MCMC methods from suitable stability properties of Feynman-Kac propagators. We show how to derive this type of stability from mixing conditions for the MCMC dynamics, namely, spectral gaps and hyperboundedness,  and from upper bounds on the relative densities in the sequence of distributions. 
\end{abstract}

\maketitle

\section{Introduction}

Since the 1950s (Metropolis et al. \cite{MRRTT53}), Markov Chain Monte Carlo (MCMC) methods have become an increasingly popular tool for challenging numerical integration problems in a wide variety of fields ranging from chemical physics to financial econometrics (see, e.g. Liu \cite{Liu01}). The basic idea is to approximate the integral of a function $f$ with respect to a probability measure $\mu$ by simulating a Markov chain with stationary distribution $\mu$ and to calculate the ergodic average of $f$ evaluated at the positions visited by the Markov chain. By construction, MCMC methods only work well if the simulated Markov chain reaches equilibrium sufficiently quickly. Roughly speaking, this is the case when $\mu$ is essentially unimodal and it is not the case when $\mu$ is severely multimodal in the sense of being characterized by several well-separated modes. In the latter case, MCMC methods tend to get stuck in local modes for very long times and therefore approach their equilibrium $\mu$ only on time-scales well beyond those that can feasibly be simulated. This metastability phenomenon is a serious drawback of standard MCMC methods in many applications.\bigskip

Sequential MCMC methods (see, e.g., Del Moral, Doucet and Jasra \cite{DDJ05} and the references therein) are a class of algorithms which try to overcome this problem. The basic idea is to approximate the target distribution $\mu$ with a sequence of distributions $\mu_0,\ldots, \mu_n$ such that $\mu_n=\mu$ is the actual target distribution and such that $\mu_0$ is easy to sample from. The aim is to carry over the good mixing/sampling properties from $\mu_0$ to the target $\mu_n$ using interpolating distributions $\mu_k$ where $\mu_{k}$ and $\mu_{k+1}$ are sufficiently similar to allow for, e.g., efficient importance sampling. The algorithm constructs a system of $N$ particles which sequentially approximates the measures $\mu_0$ to $\mu_n$. The algorithm is initialized with $N$ independent samples from $\mu_0$ and then alternates two types of steps, Importance Sampling Resampling and MCMC: In the Importance Sampling Resampling steps, a cloud of particles approximating $\mu_k$ is transformed into a cloud of particles approximating $\mu_{k+1}$ by randomly duplicating and eliminating particles in a suitable way, depending on the relative density between $\mu_{k+1}$ and $\mu_{k}$. This step is similar to the selection step in models of population genetics where particles form the population and where the relative density takes the role of a fitness function guiding the number of off-spring a particle has. In the MCMC steps, particles move independently according to an MCMC dynamics for the current target distribution in order to adjust better to the changed environment. This step resembles the mutation step in models of population genetics. The algorithm is essentially the same as the particle filter of Gordon, Salmond and Smith \cite{GSS93} yet the application - sampling from a fixed target distribution instead of filtering with an exogenous sequence of distributions - is different. The error analysis of this paper focuses on a simple Sequential MCMC algorithm with Multinomial Resampling which is a basic instance of the class of algorithms introduced in Del Moral, Doucet and Jasra \cite{DDJ05}.\bigskip

This paper contributes to the literature on error bounds and stability analysis of Sequential MCMC. Specifically, we are interested in proving explicit non-asymptotic error bounds which are well-suited to the high-dimensional and non-compact state spaces prevalent in many applications of Monte Carlo Methods. Our two main results can be summarized as follows: Theorem \ref{thmBound} provides a generic way for deriving non-asymptotic bounds on the mean squared integration error from suitable stability properties of the Feynman-Kac propagator associated with the particle system. The bound of the theorem applies to the so-called unnormalized particle approximation but immediately implies a bound for the actual particle approximation provided by the algorithm, see Lemma \ref{EMbounds}. The bound in the theorem is sharp in the sense that the leading error term corresponds to the asymptotic variance of the unnormalized particle approximation found, e.g., in the central limit theorem of Del Moral and Miclo \cite{DM00}. Theorem \ref{propLpIt}  derives $L_p$-stability bounds for the Feynman-Kac propagator which can be applied in the framework of  Theorem \ref{thmBound}. Specifically, we derive stability in $L_p$ from sufficiently good mixing of the MCMC dynamics in the sense of a sufficiently large spectral gap and from uniform upper bounds on relative densities. $L_{2p}$-$L_p$-bounds, which we need in our error bound for Sequential MCMC, follow under an additional assumption of hyperboundedness for the MCMC dynamics.\bigskip 

Basically, our results are discrete-time, general state space analogues of the error bounds and stability results given, respectively, in Eberle and Marinelli \cite{EM08}, \cite{EM09} who analyze a related continuous-time particle system with simultaneous resampling and MCMC on a finite state space. The generalization to discrete time is important since this corresponds to the algorithms implemented in practice. Moreover, in the discrete time setting a uniform lower bound on relative densities is not needed to derive the results.\bigskip

There is by now a sizeable literature on asymptotic error bounds for Sequential MCMC and related particle systems beginning with the central limit theorems in Del Moral \cite{DM96}, Chopin \cite{C04}, Künsch \cite{K05} and Capp\'e, Moulines and Ryd\'en, \cite{CMR05}). See Del Moral \cite{DM05} for an overview and many results, and Douc and Moulines \cite{DM08} for a recent contribution. Non-asymptotic error bounds, i.e., error bounds for a fixed number of particles are comparatively less studied, see Del Moral and Miclo \cite{DM00}, Theorem 7.4.4 of Del Moral \cite{DM05}, C\'erou, Del Moral and Guyader \cite{CDG11} and the references in these for some results. These works rely on mixing and boundedness conditions which will usually not be satisfied on a non-compact state space. Namely, this concerns uniform upper and lower bounds on relative densities between $\mu_{k+1}$ and $\mu_{k}$ and  uniformly bounded relative densities between MCMC transition kernels from different starting points.  To see the limitation of such conditions consider the case where $\mu_k$ and $\mu_{k+1}$ are two different Gaussian distributions on $\mathbb{R}$. Then the relative density tends either to zero or to infinity in the tails. Similarly, the transition kernels of most commonly used MCMC dynamics will be almost singular for starting points which lie far apart.\bigskip

To see why upper bounds on relative densities are less restrictive than lower bounds in typical applications of Sequential MCMC, note that the most common way of choosing the distributions $\mu_k$ is by setting $\mu_k(\cdot) \sim \exp(-\beta_k H(\cdot))$ for an increasing sequence $\beta_k$ of (artificial) inverse temperature parameters, see, e.g., Neal \cite{N01}. Then the relative density from $\mu_k$ to $\mu_{k+1}$ is proportional to  $\exp(-(\beta_{k+1}-\beta_k) H(\cdot))$ which is bounded from above whenever the Hamiltonian $H$ is bounded from below which is the case in most applications.\bigskip

Recently, Whiteley \cite{W11} proved non-asymptotic error bounds that can be expected to be applicable to non-trivial models with non-compact state spaces. In Whiteley's setting, the usual global mixing condition is replaced by a family of minorization conditions on a small set and drift conditions outside the small set, applying results of Douc, Moulines and Rosenthal \cite{DMR04}. These results are complementary to ours since they rely on a rather different set of conditions. In addition, our results are more explicit allowing, e.g, to make rather precise predictions about the dimension dependence of the algorithm. Whiteley's results have a somewhat wider scope, applying, e.g., to unbounded integrands and settings with an initial bias.\bigskip

The literature on non-asymptotic error bounds - including this contribution - has largely focused on the case where the underlying MCMC dynamics mixes well globally for all the distributions $\mu_k$. This is, of course, unfortunate since one main motivation for applying a multilevel MCMC method such as Sequential MCMC is the presence of well-separated modes in the target distribution which renders the application of simple MCMC dynamics infeasible. An exception is Eberle and Marinelli \cite{EM09, EM08} who address in their continuous-time framework the case in which the state space is decomposed into several components that are not connected for all $\mu_k$. In a follow-up paper, Schweizer \cite{Schw12}), bounds for more general multimodal cases are derived from Theorem \ref{thmBound} presented here.\bigskip

The results contained in here are extracted from a more detailed presentation in the dissertation \cite{Schw11}. The paper is structured as follows: Section \ref{Preliminaries} introduces the setting and recalls a number of results from the literature, chiefly, an explicit expression for the variance of the reweighted particle approximation. Section \ref{secQuantCBG} shows how to derive error bounds from suitable stability properties of the Feynman-Kac propagator associated with the model. Section \ref{LPglobal} shows how to derive these stability properties from mixing conditions of the MCMC dynamics. Section \ref{GLOPOLOSO} gives more explicit statements of our error bounds in terms of the constants in Poincaré and Logarithmic-Sobolev inequalities. Finally, Section \ref{secDD} gives a brief discussion of the dependence of the algorithm's error on the dimension of the state space in light of our results.\bigskip

\section{Preliminaries}\label{Preliminaries}

\subsection{Notation} 

Let $(E,r)$ be a complete, separable metric space and let $\mathcal{B}(E)$ be the $\sigma$-algebra of Borel subsets of $E$. Denote by $M(E)$ the space of finite signed Borel measures on $E$. Let $M_1(E)\subset M(E)$ be the subset of all probability measures. Let $B(E)$ be the space of bounded, measurable, real-valued functions on $E$.\bigskip

For $\mu\in M(E)$ and $f\in B(E)$ define $\mu(f)$ by
\[
\mu(f)=\int_E f(x) \mu(dx)
\]
and $\text{Var}_\mu(f)$ by
\[
\text{Var}_\mu(f)=\mu(f^2)-\mu(f)^2.
\]

Let $(\widetilde{E},\widetilde{r})$ be another Polish space. Consider a kernel $K(x,A)$ with $K(x,\cdot)\in M(\widetilde{E})$ for $x \in E$ and $K(\cdot, A)\in B(E)$ for $A\in \mathcal{B}(\widetilde{E})$. We define for $\mu\in M(E)$ the measure $\mu K\in M(\widetilde{E})$ by
\[
\mu K(A)=\int_E K(x,A)\mu (dx) \;\;\;\; \forall A \in \mathcal{B}(\widetilde{E}). 
\]
For $f \in B(\widetilde{E})$ we denote by $K(f)\in B(E)$ the function given by
\[
K(f)(x)=K(x,f)= \int_E f(z) K(x,dz)\;\;\;\; \forall x\in E.
\]

\subsection{The Measure-Valued Model}\label{tmvmodel}
Consider a sequence of Polish spaces $(E_k,r_k)$ and a sequence of probability measures $(\mu_k)_{k=0}^n$, $\mu_k \in M_1(E_k)$. This is the sequence of measures we wish to approximate with the algorithm introduced in Section \ref{IPS}. The measures $\mu_k$ are related through
\[
\mu_k(f)= \frac{\mu_{k-1}(g_{k-1,k} K_k(f))}{\mu_{k-1}(g_{k-1,k})} \;\;\;\; \forall f \in B(E_k) 
\]
for positive functions $g_{k-1,k}\in B(E_{k-1})$ and transition kernels $K_k$ with $K_k(x,\cdot)\in M_1(E_{k})$ for $x \in E_{k-1}$ and $K_k(\cdot, A)\in B(E_{k-1})$ for $A\in \mathcal{B}(E_k)$. We define the probability distribution $\hat{\mu}_k \in M_1(E_{k-1})$ by 
\[
\hat{\mu}_k(f)= \frac{\mu_{k-1}(g_{k-1,k} f)}{\mu_{k-1}(g_{k-1,k})} \;\;\;\; \forall f \in B(E_{k-1}).
\]
This implies $\hat{\mu}_k(K_k(f))=\mu_k(f)$ for $f\in B(E_k)$.\bigskip

Next we introduce the Feynman-Kac propagator $q_{j,k}$ which will be the central object of our error analysis. Define the mapping $q_{k-1,k}:B(E_k)\rightarrow B(E_{k-1})$ by
\[
q_{k-1,k}(f)=\frac{g_{k-1,k}K_k(f)}{\mu_{k-1}(g_{k-1,k})}. %\;\;\;\; \forall f \in B(E_k).
\]
Observe that this implies
\[
\mu_{k}(f)=\mu_{k-1}(q_{k-1,k}(f)).
\]
Furthermore define for $0 \leq j < k \leq n$ the mapping $q_{j,k}:B(E_k)\rightarrow B(E_j)$ by
\[
q_{j,k}(f)=q_{j,j+1}(q_{j+1,j+2}(\ldots q_{k-1,k}(f)))
\]
and $q_{k,k}(f)=f$.
Note that for $f\in B(E_k)$ we have the relation 
\[
\mu_j(q_{j,k}(f))=\mu_k(f) \;\;\;\text{ for }0\leq j \leq  k \leq n
\]
and the semigroup property
\[
q_{j,l} (q_{l,k}(f)) = q_{j,k}(f) \;\;\;\text{ for }0 \leq j <l< k \leq n.
\]

\subsection{The Interacting Particle System}\label{IPS}

In the Sequential MCMC algorithm, we approximate the measures $(\mu_k)_k$ by simulating the interacting particle system introduced in the following. We start with $N$ independent samples $\xi_0=(\xi_0^1,\ldots,\xi_0^N)$ from $\mu_0$. The particle dynamics alternates two steps: Importance Sampling Resampling and Mutation: A vector of particles $\xi_{k-1}$ approximating $\mu_{k-1}$ is transformed into a vector $\hat{\xi}_k$ approximating $\hat{\mu}_k$ by drawing $N$ conditionally independent samples from the empirical distribution of $\xi_{k-1}$ weighted with the functions $g_{k-1,k}$. Afterwards, $\hat{\xi}_k$ is transformed into a vector $\xi_k$ approximating $\mu_k$ by moving the particles $\hat{\xi}_k^i$ independently with the transition kernel $K_k$.\bigskip 

We thus have two arrays of random variables $(\xi_k^j)_{0\leq k\leq n,1\leq j\leq N}$ and $(\hat{\xi}_k^j)_{1\leq k\leq n,1\leq j\leq N}$ where $\xi_k^j$ and $\hat{\xi}_{k+1}^j$ take values in $E_k$. Denote respectively by $\mathbb{P}[\cdot]$ and $\mathbb{E}[\cdot]$ probabilities and expectations taken with respect to the randomness in the particle system, i.e., with respect to the random variables $(\xi_k^j)_{k,j}$ and $(\hat{\xi}_k^j)_{k,j}$. Denote by $\mathcal{F}_k$ the $\sigma$-algebra generated by $\xi_0,\ldots \xi_k$ and $\hat{\xi}_1,\ldots \hat{\xi}_k$ and by $\hat{\mathcal{F}}_k$ the $\sigma$-algebra generated by $\xi_0,\ldots \xi_{k-1}$ and $\hat{\xi}_1,\ldots \hat{\xi}_k$. Denote by $\eta_k^N$ the empirical measure of $\xi_k$, i.e.,
\[
\eta_k^N=\frac1N \sum_{i=1}^N \delta_{\xi_k^i}.
\]
The algorithm proceeds as follows: 

\begin{itemize}
\item[(i)] Draw $\xi_0^1,\ldots,\xi_0^N$ independently from $\mu_0$.
\item[(ii)] For $k=1,\ldots,n$,
	\begin{itemize}
		\item[(a)] draw $\hat{\xi}_k=(\hat{\xi}_k^1,\ldots,\hat{\xi}_k^N)$ according to 
			\[
				\mathbb{P}[\hat{\xi}_k\in dx | \mathcal{F}_{k-1} ]= \prod_{j=1}^N \sum_{i=1}^N \frac{g_{k-1,k}(\xi_{k-1}^i)}{\sum_{l=1}^N 	
					g_{k-1,k}(\xi_{k-1}^l)}\delta_{\xi_{k-1}^i}(dx^j),
			\]
		\item[(b)] draw $\xi_k=(\xi_k^1,\ldots,\xi_k^N)$ according to
			\[
			\mathbb{P}[\xi_{k}\in dx | \hat{\mathcal{F}}_k ]= \prod_{j=1}^N K_k(\hat{\xi}_k^j,dx^j).
			\]
	\end{itemize}
\item[(iii)] Approximate $\mu_n(f)$ by
\[
\eta_n^N(f)=\frac1N \sum_{i=1}^N f(\xi_n^i).
\]
\end{itemize}

In the following we will study, how well $\eta_n^N$ approximates $\mu_n$. We are mainly interested in the case where the state spaces $E_k$ are identical and where $K_k$ encompasses many steps of an MCMC dynamics (e.g., Metropolis) with target $\mu_k$. In that case $g_{k-1,k}$ becomes an unnormalized relative density between $\mu_{k-1}$ and $\mu_k$ and we have $\hat{\mu}_k=\mu_k$. We will restrict attention to that setting from Section \ref{LPglobal} on, but up to then this additional generality comes basically for free. At that point we will also add assumptions ensuring that $\mu_{k-1}$ and $\mu_k$ are sufficiently similar (namely, Assumption \ref{ass1}).\bigskip

We end the preliminary observations with the following lemma:\bigskip

\begin{lem}\label{bederweta} We have for $f\in B(E_k)$ and $1\leq k \leq n$
\[
\mathbb{E}[\eta_k^N (f)| \mathcal{F}_{k-1}]=\frac{\eta_{k-1}^N(q_{k-1,k}(f))}{\eta_{k-1}^N(q_{k-1,k}(1))} 
\]
and for $1 \leq j \leq N$
\[
\mathbb{E}[f(\xi_k^j)| \mathcal{F}_{k-1}]=\frac{\eta_{k-1}^N(q_{k-1,k}(f))}{\eta_{k-1}^N(q_{k-1,k}(1))}. 
\]
\end{lem}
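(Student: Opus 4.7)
The plan is to compute the conditional expectation by unpacking the two-stage sampling mechanism in step (ii) of the algorithm and then matching the resulting expression with the ratio on the right-hand side via the definitions of $q_{k-1,k}(f)$ and $q_{k-1,k}(1)$.

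First I would establish the second identity, since the first follows immediately from it. Starting with $\mathbb{E}[f(\xi_k^j)\mid \mathcal{F}_{k-1}]$, I use the tower property conditioning on $\hat{\mathcal{F}}_k$. Since $\xi_k^j$ is drawn from $K_k(\hat{\xi}_k^j,\cdot)$ independently of everything else given $\hat{\mathcal{F}}_k$, this collapses to $\mathbb{E}[K_k(f)(\hat{\xi}_k^j)\mid \mathcal{F}_{k-1}]$. Next, the conditional law of $\hat{\xi}_k^j$ given $\mathcal{F}_{k-1}$ is the $g_{k-1,k}$-weighted empirical distribution of $\xi_{k-1}$, so
\[
\mathbb{E}[f(\xi_k^j)\mid \mathcal{F}_{k-1}]=\sum_{i=1}^N\frac{g_{k-1,k}(\xi_{k-1}^i)}{\sum_{l=1}^N g_{k-1,k}(\xi_{k-1}^l)}\,K_k(f)(\xi_{k-1}^i).
\]

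The second step is to rewrite this ratio as $\eta_{k-1}^N(q_{k-1,k}(f))/\eta_{k-1}^N(q_{k-1,k}(1))$. From the definition $q_{k-1,k}(f)=g_{k-1,k}K_k(f)/\mu_{k-1}(g_{k-1,k})$, together with $K_k(1)=1$ (so that $q_{k-1,k}(1)=g_{k-1,k}/\mu_{k-1}(g_{k-1,k})$), the common factor $\mu_{k-1}(g_{k-1,k})$ and the common factor $1/N$ cancel between numerator and denominator. This gives exactly the weighted average displayed above, proving the second identity.

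For the first identity, I would write $\eta_k^N(f)=\frac1N\sum_{j=1}^N f(\xi_k^j)$, take conditional expectations termwise, and apply the second identity to each summand. Since the right-hand side does not depend on $j$, the $N$ terms collapse to a single copy of the desired ratio. The only subtlety worth flagging is the justification that the second identity really applies uniformly to each $\xi_k^j$, which holds because, conditionally on $\mathcal{F}_{k-1}$, the particles $(\hat{\xi}_k^j)_{j=1,\ldots,N}$ are i.i.d.\ from the weighted empirical distribution and $(\xi_k^j)_{j=1,\ldots,N}$ are conditionally independent mutations of them. There is no real obstacle here; the lemma is essentially a bookkeeping verification that the one-step dynamics of the particle system mirrors the Feynman-Kac propagator $q_{k-1,k}$ at the level of empirical measures.
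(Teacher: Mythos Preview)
Your proposal is correct and follows essentially the same route as the paper: both prove the second identity first via the tower property through $\hat{\mathcal{F}}_k$, reduce to $\mathbb{E}[K_k(f)(\hat{\xi}_k^j)\mid\mathcal{F}_{k-1}]$, evaluate this using the weighted empirical law of $\hat{\xi}_k^j$, and then rewrite the resulting ratio in terms of $q_{k-1,k}$; the first identity is then obtained by averaging over $j$.
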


\begin{proof}[Proof]
Note that $\mathcal{F}_{k-1} \subseteq \hat{\mathcal{F}}_k \subseteq \mathcal{F}_k$. We can thus write
\begin{eqnarray}
\mathbb{E}[f(\xi_k^j)| \mathcal{F}_{k-1}]&=&\mathbb{E}[\mathbb{E}[f(\xi_k^j)|\hat{\mathcal{F}}_k ]| \mathcal{F}_{k-1}]\nonumber\\
&=&\mathbb{E}[K_k(\hat{\xi}_{k}^j,f)| \mathcal{F}_{k-1}]\nonumber\\
&=&\frac{\sum_{i=1}^N g_{k-1,k}(\xi_{k-1}^i)K_k(\xi_{k-1}^i,f)}{\sum_{l=1}^N g_{k-1,k}(\xi_{k-1}^l)}\nonumber\\
&=&\frac{\eta_{k-1}^N (g_{k-1,k}K_k(f))}{\eta_{k-1}^N(g_{k-1,k})}\nonumber\\
&=&\frac{\eta_{k-1}^N(q_{k-1,k}(f))}{\eta_{k-1}^N(q_{k-1,k}(1))}. \nonumber
\end{eqnarray}
This proves the claim for $\mathbb{E}[f(\xi_k^j)| \mathcal{F}_{k-1}]$ and immediately implies the claim for $\mathbb{E}[\eta_k^N (f)| \mathcal{F}_{k-1}]$.
\end{proof}

\subsection{Variances of Weighted Empirical Averages}\label{VariancesWEA}

We are interested in finding efficient upper bounds for the quantities
\[
\mathbb{E}[|\eta_n^N(f)-\mu_n(f)|^2]
\]
and
\[
\mathbb{E}[|\eta_n^N(f)-\mu_n(f)|].
\]
As is shown in Lemma \ref{EMbounds} below, these quantities can be controlled in terms of the approximation error of a weighted empirical measure $\nu_n^N(f)$ which is easier to handle. In this section, we introduce the measure  $\nu_n^N(f)$ and establish an explicit formula for its quadratic error
\[
\mathbb{E}[|\nu_n^N(f)-\mu_n(f)|^2].
\]

Define for $0\leq k\leq n$
\[
\nu_k^N(f)=\varphi_k \, \eta_k^N(f),
\]
where $\varphi_k$ is given by
\[
\varphi_k=\prod_{j=0}^{k-1} \eta_j^N( q_{j,j+1}(1) )
\]
for $1\leq k \leq n$ and $\varphi_0=1$. Note that typically $\nu_k^N$ is not a probability distribution and that $\varphi_k$ is $\mathcal{F}_{k-1}$-measurable. The factor $\varphi_k$ is chosen in a way that from Lemma \ref{bederweta} we have
\begin{equation}\label{erwetak}
\mathbb{E}[\nu_k^N (f)| \mathcal{F}_{k-1}]=\nu_{k-1}^N ( q_{k-1,k}(f) ).
\end{equation}
By the unbiasedness proved in Proposition \ref{propVarNu} below it also holds that
\[
\mathbb{E}[\varphi_k]=\mathbb{E}[\nu_k^N(1)]=\mathbb{E}[\mu_k(1)]=1.
\]
Furthermore, the following relation will prove useful:
\begin{equation}\label{phiketa}
\nu_{k+1}^N(1)=\varphi_{k+1}=\varphi_{k}\eta_k^N(q_{k,k+1}(1))=\nu_{k}^N(q_{k,k+1}(1)).
\end{equation}

The connection between the approximation errors of $\eta_n^N(f)$ and $\nu_n^N(f)$ is established in the following lemma:\bigskip

\begin{lem}\label{EMbounds}
For $f\in B(E_n)$ define $f_n=f-\mu_n(f)$  and denote by $\|\cdot\|_{\text{\textnormal{sup}},n}$ the supremum norm on $B(E_n)$. Then we have the bounds
\begin{equation}\label{EMbound1}
\mathbb{E}[(\eta_n^N(f)-\mu_n(f))^2]\leq 2\,\text{\textnormal{Var}}(\nu_n^N(f_n))+2\,\|f_n \|_{\text{\textnormal{sup}},n}^2\text{\textnormal{Var}}(\nu_n^N(1))
\end{equation}
and
\begin{eqnarray}\label{EMbound2}
&&\mathbb{E}[|\eta_n^N(f)-\mu_n(f)|]\\&\leq& \text{\textnormal{Var}}(\nu_n^N(f_n))^{\frac12}+\sqrt{2}\|f_n \|_{\text{\textnormal{sup}},n}\text{\textnormal{Var}}(\nu_n^N(1))+ \sqrt{2}\text{Var}(\nu_n^N(f_n))^{\frac12}\text{\textnormal{Var}}(\nu_n^N(1))^{\frac12}.\nonumber
\end{eqnarray}
\end{lem}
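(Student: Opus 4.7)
The plan is to reduce the left-hand sides to quantities involving only $\nu_n^N$ via a single algebraic identity. Note first that $\eta_n^N(f)-\mu_n(f) = \eta_n^N(f_n)$ because $\eta_n^N$ is a probability measure. Using $\nu_n^N = \varphi_n\,\eta_n^N$ together with $\nu_n^N(1) = \varphi_n$ from (\ref{phiketa}), dividing yields $\eta_n^N(f_n) = \nu_n^N(f_n)/\nu_n^N(1)$, which I would immediately rearrange as
\[
\eta_n^N(f_n) \;=\; \nu_n^N(f_n) \;-\; \eta_n^N(f_n)\,\bigl(\nu_n^N(1)-1\bigr).
\]
Two further ingredients are used throughout: the uniform bound $|\eta_n^N(f_n)|\leq \|f_n\|_{\text{\textnormal{sup}},n}$ (again since $\eta_n^N$ is a probability measure) and the unbiasedness $\mathbb{E}[\nu_n^N(f_n)] = \mu_n(f_n) = 0$, $\mathbb{E}[\nu_n^N(1)] = 1$, delivered by Proposition \ref{propVarNu}. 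Consequently $\mathbb{E}[\nu_n^N(f_n)^2] = \text{Var}(\nu_n^N(f_n))$ and $\mathbb{E}[(\nu_n^N(1)-1)^2] = \text{Var}(\nu_n^N(1))$.

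To prove (\ref{EMbound1}), I would take absolute values in the identity, apply the triangle inequality and the uniform bound to obtain $|\eta_n^N(f_n)|\leq|\nu_n^N(f_n)| + \|f_n\|_{\text{\textnormal{sup}},n}|\nu_n^N(1)-1|$, square using $(a+b)^2\leq 2a^2+2b^2$, and take expectations. The two terms then reduce to $2\,\text{Var}(\nu_n^N(f_n))$ and $2\|f_n\|_{\text{\textnormal{sup}},n}^2\,\text{Var}(\nu_n^N(1))$, as claimed.

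For (\ref{EMbound2}), applying absolute values, the triangle inequality, and expectations to the same identity yields
\[
\mathbb{E}[|\eta_n^N(f_n)|]\;\leq\;\mathbb{E}[|\nu_n^N(f_n)|] + \mathbb{E}\bigl[|\eta_n^N(f_n)|\,|\nu_n^N(1)-1|\bigr].
\]
Jensen bounds the first summand by $\text{Var}(\nu_n^N(f_n))^{1/2}$. For the second, Cauchy-Schwarz gives $\mathbb{E}[\eta_n^N(f_n)^2]^{1/2}\,\text{Var}(\nu_n^N(1))^{1/2}$; I would then bootstrap by inserting the $L^2$ bound (\ref{EMbound1}) just established, using $\sqrt{a+b}\leq\sqrt{a}+\sqrt{b}$ to split the resulting root before multiplying out. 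This produces exactly the two $\sqrt{2}$-summands in the stated bound. The calculations themselves are routine; the only real design choice -- and the reason (\ref{EMbound2}) takes its slightly unusual form -- is the bootstrap step, which delivers $\text{Var}(\nu_n^N(1))$ unsquare-rooted in the middle term, sharper than the $\text{Var}(\nu_n^N(1))^{1/2}$ one would get from bounding $\mathbb{E}[|\nu_n^N(1)-1|]$ directly via Jensen in the small-variance regime relevant as $N\to\infty$.
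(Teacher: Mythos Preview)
Your proof is correct and follows essentially the same approach as the paper's. The paper uses the same decomposition $\eta_n^N(f_n)=\nu_n^N(f_n)+\eta_n^N(f_n)(1-\nu_n^N(1))$ (phrased there via $a^2\leq 2(a-b)^2+2b^2$ with $a=\eta_n^N(f_n)$, $b=\nu_n^N(f_n)$), bounds $|\eta_n^N(f_n)|$ by $\|f_n\|_{\text{sup},n}$, and for (\ref{EMbound2}) applies Cauchy--Schwarz followed by insertion of (\ref{EMbound1}), exactly as you do.
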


\begin{proof}[Proof of Lemma \ref{EMbounds}]
Observe that for $a,b\in \mathbb{R}$ the fact that $(a-2b)^2\geq 0$ implies
\[
a^2 \leq 2(a-b)^2+2 b^2.
\]
We can thus prove (\ref{EMbound1}) as follows:
\begin{eqnarray}
\mathbb{E}[\eta_n^N(f_n)^2]&\leq& 2 \mathbb{E}[(\eta_n^N(f_n)-\nu_n^N(f_n))^2]+2 \mathbb{E}[\nu_n^N(f_n)^2]\nonumber\\
&\leq& 2\|f_n\|_{\text{\textnormal{sup}},n}^2 \text{Var}(\nu_n^N(1))+ 2 \text{\textnormal{Var}}(\nu_n^N(f)),\nonumber
\end{eqnarray}
where the last step uses the unbiasedness of $\nu_n^N$ proved in Proposition \ref{propVarNu} below. To show (\ref{EMbound2}), observe that by the triangle inequality, by the definition of $\nu_n^N$ and by the Cauchy-Schwarz inequality we have
\begin{eqnarray}
\mathbb{E}[|\eta_n^N(f_n)|] &\leq& \mathbb{E}[|\eta_n^N(f_n)-\nu_n^N(1) \eta_n^N(f_n)|] +\mathbb{E}[|\nu_n^N(f_n)|] \nonumber\\
&\leq& \mathbb{E}[\eta_n^N(f_n)^2]^{\frac12}\mathbb{E}[(\nu_n^N(1)-1)^2]^{\frac12} + \mathbb{E}[\nu_n^N(f_n)^2]^{\frac12}\nonumber.
\end{eqnarray}
Inserting (\ref{EMbound1}) completes the proof.
\end{proof}

Thus we can indeed control the approximation error of $\eta_n^N$ in terms of the approximation error of $\nu_n^N$.\bigskip

The main result of this section shows that $\nu_k^N(f)$ is an unbiased estimator for $\mu_k(f)$ and gives an explicit expression for its variance which is well-suited for deriving our later error bounds. These results are known in the literature, see, e.g., Del Moral and Miclo (\cite{DM00}, Section 2). They are stated here (and proved below) for the sake of completeness and to present exactly what we need in a stringent way and in the form in which we need it. 

\begin{prop}\label{propVarNu}
For all $f\in B(E_n)$, 
\[
\mathbb{E}[\nu_n^N(f)]=\mu_n(f)
\]
and 
\[
\mathbb{E}[|\nu_n^N(f)-\mu_n(f)|^2]=\frac1N \text{\textnormal{Var}}_{\mu_n}(f)+\frac1N \mathbb{E}\left[\sum_{j=0}^{n-1} V_{j,n}^N(f)\right],
\]
where 
\begin{equation}\label{VjnN}
V_{j,n}^N(f)=\nu_j^N(1)\nu_j^N(q_{j,n}(f)^2)-\nu_j^N(q_{j,n}(f))^2+\nu_j^N(q_{j,j+1}(1)-1)\nu_j^N(q_{j,n}(f^2)).
\end{equation}
\end{prop}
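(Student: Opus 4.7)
My plan is to establish unbiasedness first by iteration, and then prove the variance formula through a martingale decomposition followed by an algebraic reshuffle into the stated form.

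For unbiasedness: by equation (\ref{erwetak}) and the tower property, $\mathbb{E}[\nu_k^N(f)] = \mathbb{E}[\nu_{k-1}^N(q_{k-1,k}(f))]$. Applying this $n$ times and invoking the semigroup property of $q_{j,k}$ yields
\[
\mathbb{E}[\nu_n^N(f)] = \mathbb{E}[\nu_0^N(q_{0,n}(f))] = \mu_0(q_{0,n}(f)) = \mu_n(f),
\]
where the second equality uses that $\nu_0^N = \eta_0^N$ averages $N$ iid draws from $\mu_0$, and the third uses $\mu_j\circ q_{j,k} = \mu_k$ from Section~\ref{tmvmodel}.

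For the variance, the starting point is that, by (\ref{erwetak}) and the semigroup property, the sequence $\tilde Z_k := \nu_k^N(q_{k,n}(f))$ is an $(\mathcal{F}_k)$-martingale with $\tilde Z_n = \nu_n^N(f)$ and $\mathbb{E}[\tilde Z_0] = \mu_n(f)$. Orthogonality of its increments gives
\[
\mathbb{E}\bigl[(\nu_n^N(f) - \mu_n(f))^2\bigr] = \frac{1}{N}\text{Var}_{\mu_0}(q_{0,n}(f)) + \sum_{k=1}^n \mathbb{E}\bigl[\text{Var}(\tilde Z_k \mid \mathcal{F}_{k-1})\bigr],
\]
the initial term being computed directly from the iid samples $\xi_0^1,\ldots,\xi_0^N$. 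For each $k \geq 1$, I would observe that, conditionally on $\mathcal{F}_{k-1}$, the pairs $(\hat\xi_k^j, \xi_k^j)_{j\leq N}$ are iid, hence $\text{Var}(\tilde Z_k \mid \mathcal{F}_{k-1}) = (\varphi_k^2/N)\,\text{Var}(q_{k,n}(f)(\xi_k^1)\mid\mathcal{F}_{k-1})$. Applying Lemma~\ref{bederweta} with $f$ replaced in turn by $q_{k,n}(f)$ and $q_{k,n}(f)^2$, and using $\varphi_k = \varphi_{k-1}\eta_{k-1}^N(q_{k-1,k}(1))$ to absorb the normalization $\eta_{k-1}^N(q_{k-1,k}(1))^2$ into the factor $\varphi_k^2$, the conditional variance collapses to
\[
\frac{1}{N}\bigl[\nu_{k-1}^N(q_{k-1,k}(1))\,\nu_{k-1}^N(q_{k-1,k}(q_{k,n}(f)^2)) - \nu_{k-1}^N(q_{k-1,n}(f))^2\bigr].
\]

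The final step is an algebraic reshuffle into the $V_{j,n}^N(f)$ form. The leading constant $\text{Var}_{\mu_0}(q_{0,n}(f))$ is converted into $\text{Var}_{\mu_n}(f)$ via the telescoping identity $\mu_0(q_{0,n}(f)^2) - \mu_n(f^2) = \sum_{j=0}^{n-1}\mu_j\bigl[q_{j,n}(f)^2 - q_{j,j+1}(q_{j+1,n}(f)^2)\bigr]$, which follows from $q_{0,j}\circ q_{j,k} = q_{0,k}$ together with $\mu_0\circ q_{0,j} = \mu_j$. The resulting corrections are then promoted to $\mathbb{E}[\nu_j^N(\cdot)]$ by unbiasedness and merged with the raw martingale sum, using the identifications $\nu_j^N(1) = \varphi_j$ and $\nu_j^N(q_{j,j+1}(1)) = \varphi_{j+1}$ from (\ref{phiketa}), to produce the three summands of~(\ref{VjnN}). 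I expect this last step to be the main obstacle: the martingale decomposition naturally produces a ``one-step-ahead'' conditional second moment $q_{j,j+1}(q_{j+1,n}(f)^2)$, whereas $V_{j,n}^N(f)$ is phrased in the ``same-step'' quantities $q_{j,n}(f)^2$ and $q_{j,n}(f^2)$, so reconciling them requires carefully redistributing the Jensen-type defects $q_{k,k+1}(h^2) - q_{k,k+1}(h)^2$ across the time steps and coordinating this with the unbiasedness identity.
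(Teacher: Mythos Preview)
Your martingale set-up and the computation of the conditional variance are correct and coincide with the paper's Lemma~\ref{lmaN}. The gap is in the final ``reshuffle''. Your plan is to telescope at the level of the \emph{measures} $\mu_j$,
\[
\mu_0(q_{0,n}(f)^2)-\mu_n(f^2)=\sum_{j=0}^{n-1}\mu_j\!\left[q_{j,n}(f)^2-q_{j,j+1}(q_{j+1,n}(f)^2)\right],
\]
and then ``promote'' each $\mu_j(h)$ to $\mathbb{E}[\nu_j^N(h)]$ by unbiasedness. But unbiasedness only manufactures \emph{linear} expressions $\mathbb{E}[\nu_j^N(h)]$, whereas both your raw martingale sum and the target $V_{j,n}^N(f)$ are built from \emph{products} $\nu_j^N(g)\,\nu_j^N(h)$. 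After your promotion and merge you would obtain, for each $j$, terms like $\nu_j^N(q_{j,n}(f)^2)$ and $\bigl(\nu_j^N(q_{j,j+1}(1))-1\bigr)\,\nu_j^N\!\bigl(q_{j,j+1}(q_{j+1,n}(f)^2)\bigr)$, not the required $\nu_j^N(1)\,\nu_j^N(q_{j,n}(f)^2)$ and $\nu_j^N(q_{j,j+1}(1)-1)\,\nu_j^N(q_{j,n}(f^2))$. Since $\nu_j^N(1)=\varphi_j\neq 1$ in general, these do not agree, and the discrepancy cannot be removed by further linear unbiasedness identities.

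The missing idea is that the telescoping has to be done directly at the level of the products $\mathbb{E}[\nu_j^N(g)\,\nu_j^N(h)]$, exploiting that $\nu_{j+1}^N(1)=\varphi_{j+1}$ is $\mathcal{F}_j$-measurable. Concretely,
\[
\mathbb{E}\!\left[\nu_{j+1}^N(1)\,\nu_{j+1}^N(h)\,\middle|\,\mathcal{F}_j\right]
=\varphi_{j+1}\,\nu_j^N(q_{j,j+1}(h))
=\nu_j^N(q_{j,j+1}(1))\,\nu_j^N(q_{j,j+1}(h)),
\]
so $\mathbb{E}[\nu_j^N(q_{j,j+1}(1))\,\nu_j^N(q_{j,j+1}(q_{j+1,n}(f)^2))]=\mathbb{E}[\nu_{j+1}^N(1)\,\nu_{j+1}^N(q_{j+1,n}(f)^2)]$, which lets you shift indices in the product sum and produce the boundary term $\mathbb{E}[\nu_n^N(1)\,\nu_n^N(f^2)]$. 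A second application (with $h=q_{j+1,n}(f^2)$) telescopes this boundary term back down and generates exactly the summands $\nu_j^N(q_{j,j+1}(1)-1)\,\nu_j^N(q_{j,n}(f^2))$. The paper packages these two product-level telescopes as the auxiliary martingales $L_k$ and $M_k$ of Lemma~\ref{lmaMartLM}; once you add that ingredient, the rest of your outline goes through.
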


The proof of the proposition is based on martingale methods and proceeds in a number of lemmas which make up the remainder of this section. The actual proof of the proposition follows at the end. Note first that for any fixed $f\in B(E_n)$ the process $(A_j)_{j=0}^n$ defined by
\[
A_j=\nu_j^N(q_{j,n}(f))
\]
is an $(\mathcal{F}_j)$-martingale by (\ref{erwetak}) and by the semigroup property of the mappings $q_{j,n}$. Recall that by the Doob decomposition the process $H_j$ given by 
\begin{equation}\label{Doob}
H_j=A_j^2-A_0^2-\sum_{k=0}^{j-1}\mathbb{E}[A_{k+1}^2-A_k^2|\mathcal{F}_k]
\end{equation}
is a martingale. We next derive a more explicit expression for $H_j$.%\bigskip

\begin{lem}\label{lmaN}
We have
\[
H_j=A_j^2-A_0^2-\frac1N \sum_{k=0}^{j-1} \nu_k^N (q_{k,k+1}(1))\nu_k^N (q_{k,k+1}(q_{k+1,n}(f)^2))-\nu_k^N(q_{k,n}(f))^2.
\]
\end{lem}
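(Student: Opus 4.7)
The plan is to start from the Doob decomposition itself and turn the conditional quadratic increments into an explicit expression by exploiting the two-step structure of the particle dynamics. Since $A$ is an $(\mathcal{F}_j)$-martingale, the conditional increment simplifies as
\[
\mathbb{E}[A_{k+1}^2-A_k^2\mid \mathcal{F}_k]=\text{Var}(A_{k+1}\mid \mathcal{F}_k),
\]
so what remains is to compute this conditional variance explicitly and to identify it with the $k$-th summand claimed in the lemma.

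The key observation is that $\varphi_{k+1}=\varphi_k\eta_k^N(q_{k,k+1}(1))$ is $\mathcal{F}_k$-measurable, so
\[
\text{Var}(A_{k+1}\mid \mathcal{F}_k)=\varphi_{k+1}^2\,\text{Var}\bigl(\eta_{k+1}^N(h)\mid\mathcal{F}_k\bigr)
\]
for $h:=q_{k+1,n}(f)$. Next I would observe that, conditional on $\mathcal{F}_k$, the particles $\xi_{k+1}^1,\ldots,\xi_{k+1}^N$ are i.i.d.: drawing each $\hat\xi_{k+1}^j$ independently from the weighted empirical distribution of $\xi_k$ in step (ii)(a) and then mutating independently via $K_{k+1}$ in step (ii)(b) yields an i.i.d.\ sample. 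Consequently,
\[
\text{Var}\bigl(\eta_{k+1}^N(h)\mid \mathcal{F}_k\bigr)=\frac1N\Bigl(\mathbb{E}[h(\xi_{k+1}^1)^2\mid \mathcal{F}_k]-\mathbb{E}[h(\xi_{k+1}^1)\mid\mathcal{F}_k]^2\Bigr).
\]

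Now I would invoke Lemma \ref{bederweta}, applied once to $h$ and once to $h^2$, to rewrite both conditional expectations in terms of $\eta_k^N$ and $q_{k,k+1}$:
\[
\mathbb{E}[h(\xi_{k+1}^1)\mid\mathcal{F}_k]=\frac{\eta_k^N(q_{k,n}(f))}{\eta_k^N(q_{k,k+1}(1))},\qquad \mathbb{E}[h(\xi_{k+1}^1)^2\mid\mathcal{F}_k]=\frac{\eta_k^N(q_{k,k+1}(h^2))}{\eta_k^N(q_{k,k+1}(1))},
\]
where the first uses the semigroup property $q_{k,k+1}(q_{k+1,n}(f))=q_{k,n}(f)$. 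Multiplying the resulting variance by $\varphi_{k+1}^2=\varphi_k^2\eta_k^N(q_{k,k+1}(1))^2$ cancels one power of $\eta_k^N(q_{k,k+1}(1))$ in the first term and two powers in the second, after which $\varphi_k\eta_k^N(\cdot)=\nu_k^N(\cdot)$ immediately produces
\[
\text{Var}(A_{k+1}\mid\mathcal{F}_k)=\frac1N\Bigl[\nu_k^N(q_{k,k+1}(1))\,\nu_k^N(q_{k,k+1}(q_{k+1,n}(f)^2))-\nu_k^N(q_{k,n}(f))^2\Bigr].
\]
Summing over $k=0,\ldots,j-1$ and substituting into the Doob decomposition gives the claimed formula.

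The main obstacles are conceptual bookkeeping rather than deep: (i) correctly arguing conditional i.i.d.-ness of $\xi_{k+1}^1,\ldots,\xi_{k+1}^N$ given $\mathcal{F}_k$ (tower property over $\hat{\mathcal{F}}_{k+1}$ followed by the product structure in step (ii)(a)), and (ii) keeping track of how the $\varphi_{k+1}^2$ factor outside the variance combines with the $\eta_k^N(q_{k,k+1}(1))$ in the denominators to land exactly on the $\nu_k^N$ expressions. No additional analytic estimates are needed beyond what is already in the preliminaries.
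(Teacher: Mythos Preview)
Your proposal is correct and follows essentially the same route as the paper: both arguments exploit the conditional i.i.d.\ structure of $\xi_{k+1}^1,\ldots,\xi_{k+1}^N$ given $\mathcal{F}_k$, apply Lemma~\ref{bederweta} to $h$ and $h^2$, and then convert $\varphi_{k+1}^2$ times the resulting $\eta_k^N$-ratios into $\nu_k^N$-expressions via $\varphi_{k+1}=\varphi_k\,\eta_k^N(q_{k,k+1}(1))$. The only cosmetic difference is that you invoke the martingale identity $\mathbb{E}[A_{k+1}^2-A_k^2\mid\mathcal{F}_k]=\mathrm{Var}(A_{k+1}\mid\mathcal{F}_k)$ at the outset, whereas the paper computes $\mathbb{E}[A_{k+1}^2\mid\mathcal{F}_k]$ directly and subtracts $A_k^2$ afterwards.
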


\begin{proof}[Proof]
By the definitions of $A_k$, $\nu_k^N$ and $\eta_k^N$ and since the random variables $\xi_{k+1}^1,\ldots, \xi_{k+1}^N$ are conditionally (on $\mathcal{F}_k$) independent we can write
\begin{eqnarray}
\mathbb{E}[A_{k+1}^2|\mathcal{F}_k]&=& \frac{\varphi_{k+1}^2}{N^2}\mathbb{E}\left[\left(\left. \sum_{j=1}^N q_{k+1,n}(f)(\xi_{k+1}^j) \right)^2\right|\mathcal{F}_k\right]\nonumber\\
&=& \frac{\varphi_{k+1}^2}{N^2} \left[
\sum_{j=1}^N \mathbb{E}[ q_{k+1,n}(f)(\xi_{k+1}^j)^2 |\mathcal{F}_k]-\sum_{j=1}^N \mathbb{E}[ q_{k+1,n}(f)(\xi_{k+1}^j) |\mathcal{F}_k]^2\right.\nonumber\\  &+&\left. \left(\sum_{j=1}^N \mathbb{E}[ q_{k+1,n}(f)(\xi_{k+1}^j) |\mathcal{F}_k]\right)^2  \right] \nonumber\\
&=& \frac{\varphi_{k+1}^2}{N^2} \left(N \mathbb{E}[ q_{k+1,n}(f)(\xi_{k+1}^1)^2 |\mathcal{F}_k]-N \mathbb{E}[ q_{k+1,n}(f)(\xi_{k+1}^1) |\mathcal{F}_k]^2\right.\nonumber\\
&+&N^2 \left. \mathbb{E}[ q_{k+1,n}(f)(\xi_{k+1}^1) |\mathcal{F}_k]^2\right). \nonumber
\end{eqnarray}
Thus by Lemma \ref{bederweta} and the semigroup property of the $q_{j,k}$ we have
\begin{eqnarray}
\mathbb{E}[A_{k+1}^2|\mathcal{F}_k]&=& \frac{\varphi_{k+1}^2}{N^2} \left[
N \frac{\eta_k^N(q_{k,k+1}(q_{k+1,n}(f)^2))}{\eta_k^N(q_{k,k+1}(1))}-N \frac{\eta_k^N(q_{k,n}(f))^2}{\eta_k^N(q_{k,k+1}(1))^2}+N^2 \frac{\eta_k^N(q_{k,n}(f))^2}{\eta_k^N(q_{k,k+1}(1))^2}\right]\nonumber\\
&=& \frac{1}{N}\bigg[\nu_{k}^N(q_{k,k+1}(1)) \nu_k^N(q_{k,k+1}(q_{k+1,n}(f)^2))- \nu_k^N(q_{k,n}(f))^2\bigg]+A_k^2,\nonumber
\end{eqnarray}
where in the last step we used (\ref{phiketa}). Inserting the resulting expression for $\mathbb{E}[A_{k+1}^2-A_k^2|\mathcal{F}_k]$ into (\ref{Doob}) concludes the proof.
\end{proof}

We can use Lemma \ref{lmaN} to derive an explicit expression for $\mathbb{E}[|\nu_n^N(f)-\mu_n(f)|^2]$. In order to make this expression more tractable, concretely, in order to remove the terms $q_{k,k+1}(q_{k+1,n}(f)^2)$, we use the following lemma:\bigskip

\begin{lem}\label{lmaMartLM}
For all $f\in B(E_n)$ the processes
\begin{eqnarray}\label{frmLk}
L_k&=&\nu_k^N(1)\nu_k^N(q_{k,n}(f)^2)-\nu_0^N(1) \nu_0^N(q_{0,n}(f)^2)\nonumber\\&-&\sum_{j=0}^{k-1} \nu_j^N (q_{j,j+1}(1))\nu_j^N (q_{j,j+1}(q_{j+1,n}(f)^2))
+\sum_{j=0}^{k-1} \nu_j^N(1)\nu_j^N(q_{j,n}(f)^2)
\end{eqnarray}
and
\begin{equation}\label{frmMk}
M_k=\nu_k^N(1)\nu_k^N(q_{k,n}(f^2))-\nu_0^N(1)\nu_0^N(q_{0,n}(f^2))-\sum_{j=0}^{k-1}\nu_{j}^N(q_{j,j+1}(1)-1)\nu_j^N(q_{j,n}(f^2))
\end{equation}
are $(\mathcal{F}_k)$-martingales.
\end{lem}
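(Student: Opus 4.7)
The plan is to verify the martingale property by computing $\mathbb{E}[M_{k+1}-M_k\mid\mathcal{F}_k]$ and $\mathbb{E}[L_{k+1}-L_k\mid\mathcal{F}_k]$ directly, showing that each equals zero thanks to the way the compensators on the right-hand sides of (\ref{frmMk}) and (\ref{frmLk}) were constructed. The whole argument rests on three ingredients already on the table: the one-step recursion (\ref{erwetak}) for $\nu_k^N$, the semigroup property of the propagators $q_{j,k}$, and the identity (\ref{phiketa}) $\varphi_{k+1}=\nu_k^N(q_{k,k+1}(1))$.

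The decisive observation is that $\nu_{k+1}^N(1)=\varphi_{k+1}$ is already $\mathcal{F}_k$-measurable, so it factors out of any conditional expectation given $\mathcal{F}_k$. For $M_k$ I would apply this to
\[
\mathbb{E}[\nu_{k+1}^N(1)\,\nu_{k+1}^N(q_{k+1,n}(f^2))\mid\mathcal{F}_k]=\varphi_{k+1}\,\mathbb{E}[\nu_{k+1}^N(q_{k+1,n}(f^2))\mid\mathcal{F}_k];
\]
then (\ref{erwetak}) with test function $q_{k+1,n}(f^2)\in B(E_{k+1})$ followed by the semigroup property turns the inner expectation into $\nu_k^N(q_{k,n}(f^2))$, while (\ref{phiketa}) rewrites $\varphi_{k+1}$ as $\nu_k^N(q_{k,k+1}(1))$. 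Subtracting $\nu_k^N(1)\nu_k^N(q_{k,n}(f^2))$ gives precisely $\nu_k^N(q_{k,k+1}(1)-1)\nu_k^N(q_{k,n}(f^2))$, which matches the $j=k$ compensator term in (\ref{frmMk}) and hence shows $\mathbb{E}[M_{k+1}-M_k\mid\mathcal{F}_k]=0$.

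For $L_k$ I would run the same computation with $q_{k+1,n}(f)^2$ in place of $q_{k+1,n}(f^2)$. The step that uses (\ref{erwetak}) again yields $\nu_k^N\bigl(q_{k,k+1}(q_{k+1,n}(f)^2)\bigr)$, but this time the semigroup property does \emph{not} collapse it further, since in general $q_{k,k+1}(q_{k+1,n}(f)^2)\neq q_{k,n}(f^2)$: squaring does not commute with the propagator. This is exactly why (\ref{frmLk}) contains two compensator sums rather than one, and a direct match with $\mathbb{E}[L_{k+1}-L_k\mid\mathcal{F}_k]$ then falls out.

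The main (minor) obstacle is purely bookkeeping: keeping track of which terms are $\mathcal{F}_k$-measurable and applying (\ref{erwetak}) to the right test function at the right level. Integrability is not an issue because $f\in B(E_n)$ is bounded and all particle averages of bounded functions are bounded by $\|f\|_{\text{sup},n}$ (times the bounded factors $\varphi_k$, which are integrable by the unbiasedness that will be proved in Proposition \ref{propVarNu} — though for the martingale property it suffices that each $L_k,M_k$ is a bounded random variable for every fixed $k$ and $N$, which is immediate).
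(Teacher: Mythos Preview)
Your proof is correct and follows essentially the same approach as the paper. The paper phrases the argument via the Doob decomposition of the processes $B_k=\nu_k^N(1)\nu_k^N(q_{k,n}(f)^2)$ and $\widetilde{B}_k=\nu_k^N(1)\nu_k^N(q_{k,n}(f^2))$ and then computes $\mathbb{E}[B_{j+1}\mid\mathcal{F}_j]$ and $\mathbb{E}[\widetilde{B}_{j+1}-\widetilde{B}_j\mid\mathcal{F}_j]$ explicitly using Lemma~\ref{bederweta} and (\ref{phiketa}); your direct verification that $\mathbb{E}[L_{k+1}-L_k\mid\mathcal{F}_k]=\mathbb{E}[M_{k+1}-M_k\mid\mathcal{F}_k]=0$ performs exactly the same computation, just without naming the Doob decomposition.
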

\begin{proof}
By the Doob decomposition, for $B_k=\nu_k^N(1)\nu_k^N(q_{k,n}(f)^2)$ the process
\[
L_k=B_k-B_0-\sum_{j=0}^{k-1}\mathbb{E}[B_{j+1}|\mathcal{F}_j]+\sum_{j=0}^{k-1}B_{j}
\]
is a martingale. To obtain the expression in (\ref{frmLk}) it is sufficient to note that by Lemma \ref{bederweta} and by (\ref{phiketa})
\begin{eqnarray}
\mathbb{E}[B_{j+1}|\mathcal{F}_j] &=& \varphi_{j+1}^2 \mathbb{E}[\eta_{j+1}^N(q_{j+1,n}(f)^2)|\mathcal{F}_j]= \varphi_{j+1}^2 \frac{\eta_{j}^N (q_{j,j+1}(q_{j+1,n}(f)^2))}{\eta_{j}^N (q_{j,j+1}(1))}\nonumber\\
&=& \nu_j^N(q_{j,j+1}(1)) \nu_j^N(q_{j,j+1}(q_{j+1,n}(f)^2)).\nonumber
\end{eqnarray}
Likewise for $\widetilde{B}_k= \nu_k^N(1)\nu_k^N(q_{k,n}(f^2))$ the process
\[
M_k=\widetilde{B}_k-\widetilde{B}_0-\sum_{j=0}^{k-1}\mathbb{E}[\widetilde{B}_{j+1}-\widetilde{B}_j|\mathcal{F}_j]
\]
is a martingale. To obtain the expression in (\ref{frmMk}) note that by (\ref{erwetak}) and by (\ref{phiketa})
\begin{eqnarray}
\mathbb{E}[\widetilde{B}_{j+1}-\widetilde{B}_j|\mathcal{F}_j]&=& \nu_{j+1}^N(1) \mathbb{E}[\nu_{j+1}^N(q_{j+1,n}(f^2)) |\mathcal{F}_j]-\nu_j^N(1)\nu_j^N(q_{j,n}(f^2))\nonumber\\
&=& (\nu_{j+1}^N(1)-\nu_{j}^N(1))\nu_j^N(q_{j,n}(f^2))\nonumber\\
&=&\nu_{j}^N(q_{j,j+1}(1)-1)\nu_j^N(q_{j,n}(f^2)).\nonumber
\end{eqnarray}
\end{proof}

With these lemmas, we have established the tools needed to prove Proposition \ref{propVarNu}: 

\begin{proof}[Proof of Proposition \ref{propVarNu}]
%D60 
For the unbiasedness, note that 
\begin{eqnarray}
\nu_n^N(f)-\mu_n(f)&=&\nu_n^N(q_{n,n}(f))-\nu_0^N(q_{0,n}(f))+\nu_0^N(q_{0,n}(f))-\mu_0(q_{0,n}(f))\nonumber\\
&=&A_n-A_0+\nu_0^N(q_{0,n}(f))-\mu_0(q_{0,n}(f)).\nonumber
\end{eqnarray}
This implies
\[
\mathbb{E}[\nu_n^N(f)]=\mathbb{E}[\mu_n(f)]
\]
since the martingale property of $A_n$ implies $\mathbb{E}[A_n-A_0]=0$, and since we have
\[
\mathbb{E}[\nu_0^N(q_{0,n}(f))]=\mathbb{E}[\mu_0(q_{0,n}(f))]
\]
because of $\nu_0^N=\eta_0^N$ and because the particles $\xi_0^j$ are independent samples from $\mu_0$.\bigskip

Now note that by conditional independence
\begin{eqnarray}
\mathbb{E}[|\nu_n^N(f)-\mu_n(f)|^2]&=&\mathbb{E}[|(A_n-A_0)+ (\nu_0^N(q_{0,n}(f))-\mu_0(q_{0,n}(f)))|^2]\nonumber\\
&=& \mathbb{E}[A_n^2-A_0^2]+\mathbb{E}[(\nu_0^N(q_{0,n}(f))-\mu_0(q_{0,n}(f)))^2].
\end{eqnarray}
Note that the second summand equals $\frac{1}{N}\text{Var}_{\mu_0}(q_{0,n}(f))$. Using Lemma \ref{lmaN} and, in the second step, (\ref{frmLk}) we can thus write
\begin{eqnarray}\label{fastfertig}
\mathbb{E}[|\nu_n^N(f)-\mu_n(f)|^2]&=&\frac{1}{N}\Big( \text{Var}_{\mu_0}(q_{0,n}(f))\nonumber\\
&+&E\Big[\sum_{k=0}^{n-1} \nu_k^N (q_{k,k+1}(1))\nu_k^N (q_{k,k+1}(q_{k+1,n}(f)^2))-\nu_k^N(q_{k,n}(f))^2\Big]\Big)\nonumber\\
&=&\frac{1}{N}\Big( \text{Var}_{\mu_0}(q_{0,n}(f)) + E\Big[\nu_n^N(1)\nu_n^N(q_{n,n}(f)^2)-\nu_0^N(1) \nu_0^N(q_{0,n}(f)^2) \nonumber\\ 
&+&\sum_{k=0}^{n-1} \nu_k^N(1)\nu_k^N(q_{k,n}(f)^2)-\nu_k^N(q_{k,n}(f))^2\Big] \Big).
\end{eqnarray}
Now observe that
\begin{eqnarray}\label{varmu1}
\text{Var}_{\mu_0}(q_{0,n}(f)) - \mathbb{E}[\nu_0^N(1) \nu_0^N(q_{0,n}(f)^2)]&=&\text{Var}_{\mu_0}(q_{0,n}(f)) - \mu_0(q_{0,n}(f)^2)\nonumber\\
&=&-\mu_0(q_{0,n}(f))^2=-\mu_n(f)^2.
\end{eqnarray}
Moreover, by (\ref{frmMk}) we have
\begin{eqnarray}\label{varmu2}
\mathbb{E}[\nu_n^N(1)\nu_n^N(f^2)]-\mu_n(f)^2&=& \mathbb{E}[\nu_0^N(1)\nu_0^N(q_{0,n}(f^2))]-\mu_n(f)^2\nonumber\\
&+&\sum_{j=0}^{n-1}\mathbb{E}[\nu_j^N(q_{j,j+1}(1)-1)\nu_j^N(q_{j,n}(f^2))]\nonumber\\
&=&\text{Var}_{\mu_n}(f) + \sum_{j=0}^{n-1}\mathbb{E}[\nu_j^N(q_{j,j+1}(1)-1)\nu_j^N(q_{j,n}(f^2))].\nonumber\\
\end{eqnarray}
Inserting (\ref{varmu1}) and then (\ref{varmu2}) into (\ref{fastfertig}) yields
\[
\mathbb{E}[|\nu_n^N(f)-\mu_n(f)|^2]=\frac{1}{N}\left( \text{Var}_{\mu_n}(f)+\mathbb{E}\left[\sum_{j=0}^{n-1}V_{j,n}^N(f)\right]\right)
\]
with
\[
V_{j,n}^N(f)=\nu_j^N(1)\nu_j^N(q_{j,n}(f)^2)-\nu_j^N(q_{j,n}(f))^2
+\nu_j^N(q_{j,j+1}(1)-1)\nu_j^N(q_{j,n}(f^2))
\]
so we are done.
\end{proof}

Throughout, we assume the integrands $f$ to be bounded. This assumption becomes crucial when transferring results from the weighted particle measure to the original one in Lemma \ref{EMbounds}. For this reason, we restrict attention to bounded integrands, avoiding to introduce systems of integrability conditions (as is done, e.g., in Chapter 9.2 of Capp\'e, Moulines and Ryd\'en, \cite{CMR05}). However, the results on the weighted particle measures $\nu_n^N$ do not rely on this boundedness and could be generalized along such lines.

\section{Non-asymptotic Error Bounds}\label{secQuantCBG}
In this section we derive non-asymptotic error bounds from our expression for the variance of $\nu_n^N(f)$ derived in the previous section. For $0 \leq j \leq n$, let $\|\cdot\|_j$ be a norm on the function space $B(E_j)$ such that $\|f\|_j<\infty$ for all $f\in B(E_j)$. We obtain our bounds from the following assumption which arises naturally in the proofs. In the later sections, we show how to verify the assumption from more elementary conditions in more specific settings.

\begin{ass}\label{cjkbound} 
For all $0 \leq j <k \leq n$, there exists a constant $c_{j,k}$ such that for all $f\in B(E_k)$, the following stability inequality for the propagator $q_{j,k}$ is satisfied:
\begin{equation}\label{cjninequality}
\max(\|1\|_j\|q_{j,k}(f)^2\|_j,\|q_{j,k}(f)\|_j^2,\|q_{j,k}(f^2)\|_j)  \leq c_{j,k}\|f\|_k^2.
\end{equation}
\end{ass}
In the following we show that the quadratic approximation error of $\nu_n^N$ can essentially be controlled through the constants $c_{j,k}$.\bigskip 

Recall that the approximation error was given by the expected sum over $j$ of the expressions $V_{j,n}^N$ defined in the previous sections. We first show how $V_{j,n}^N$ can be bounded through  $V_{j,n}$ defined by
\[
V_{j,n}(f)=\text{Var}_{\mu_j}(q_{j,n}(f))
\]
and an error term. Note that $V_{j,n}$ is what we obtain when substituting $\nu_{j}^N$ by $\mu_j$ in our expression for $V_{j,n}^N$. Define
\[
\varepsilon_j^{N}=\sup\Big\{
\mathbb{E}[|\nu_j^N(f)-\mu_j(f)|^2]\, \Big|\, \|f\|_j\leq 1
 \Big\}.
\]
Then we have the following result:\bigskip

\begin{prop}\label{propVjncjn}
For $0\leq j <n $ we have
\[
\mathbb{E}[V_{j,n}^N(f)]\leq V_{j,n}(f) + c_{j,n} \| f\|_n^2\Big(2+ \| q_{j,j+1}(1)-1 \|_j\Big) \varepsilon_j^{N}.
\]
\end{prop}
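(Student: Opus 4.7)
The plan is to decompose $\mathbb{E}[V_{j,n}^N(f)] - V_{j,n}(f)$ into three error contributions, one per summand in (\ref{VjnN}), and to bound each using the unbiasedness of $\nu_j^N$ (Proposition \ref{propVarNu}), Cauchy-Schwarz, and Assumption \ref{cjkbound}. Concretely, writing $V_{j,n}(f) = \mu_j(q_{j,n}(f)^2) - \mu_j(q_{j,n}(f))^2$, the difference reads
\begin{align*}
\mathbb{E}[V_{j,n}^N(f)] - V_{j,n}(f)
&= \bigl(\mathbb{E}[\nu_j^N(1)\nu_j^N(q_{j,n}(f)^2)] - \mu_j(q_{j,n}(f)^2)\bigr) \\
&\quad -\bigl(\mathbb{E}[\nu_j^N(q_{j,n}(f))^2] - \mu_j(q_{j,n}(f))^2\bigr) \\
&\quad +\mathbb{E}[\nu_j^N(q_{j,j+1}(1)-1)\nu_j^N(q_{j,n}(f^2))].
\end{align*}

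For each expectation of the form $\mathbb{E}[\nu_j^N(h)\nu_j^N(g)]$ I would use the algebraic identity $\nu_j^N(h)\nu_j^N(g) = (\nu_j^N(h)-\mu_j(h))(\nu_j^N(g)-\mu_j(g)) + \mu_j(h)\nu_j^N(g) + \mu_j(g)\nu_j^N(h) - \mu_j(h)\mu_j(g)$ which, in combination with $\mathbb{E}[\nu_j^N(h)] = \mu_j(h)$, gives $\mathbb{E}[\nu_j^N(h)\nu_j^N(g)] - \mu_j(h)\mu_j(g) = \mathbb{E}[(\nu_j^N(h)-\mu_j(h))(\nu_j^N(g)-\mu_j(g))]$. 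For the first contribution (with $h=1$, $g=q_{j,n}(f)^2$) this directly isolates the covariance-type remainder since $\mu_j(h)\mu_j(g)=\mu_j(g)$; for the second (with $h=g=q_{j,n}(f)$) the error collapses to the variance of $\nu_j^N(q_{j,n}(f))$; for the third (with $h=q_{j,j+1}(1)-1$, $g=q_{j,n}(f^2)$) the crucial relation $\mu_j(q_{j,j+1}(1)) = \mu_{j+1}(1) = 1$ ensures $\mu_j(h)=0$, so the deterministic cross-term vanishes and only the centred covariance survives.

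Cauchy-Schwarz together with the definition of $\varepsilon_j^N$ then bounds each remaining covariance-type expectation in absolute value by $\|h\|_j\|g\|_j\,\varepsilon_j^N$. Assumption \ref{cjkbound} is precisely calibrated so that $\|1\|_j\|q_{j,n}(f)^2\|_j$, $\|q_{j,n}(f)\|_j^2$, and $\|q_{j,n}(f^2)\|_j$ are each at most $c_{j,n}\|f\|_n^2$, producing contributions bounded respectively by $c_{j,n}\|f\|_n^2\varepsilon_j^N$, $c_{j,n}\|f\|_n^2\varepsilon_j^N$, and $\|q_{j,j+1}(1)-1\|_j\, c_{j,n}\|f\|_n^2\varepsilon_j^N$; summing via the triangle inequality yields the claimed factor $2+\|q_{j,j+1}(1)-1\|_j$. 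I do not foresee any real obstacle: the only point requiring care is the bookkeeping that matches each Cauchy-Schwarz factor against exactly one of the three clauses in Assumption \ref{cjkbound}, which is precisely why the assumption was packaged with those three bounds sharing a common right-hand side.
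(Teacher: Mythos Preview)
Your proposal is correct and follows essentially the same route as the paper: the paper also isolates the covariance identity $\mathbb{E}[\nu_j^N(g)\nu_j^N(h)]-\mu_j(g)\mu_j(h)=\mathbb{E}[(\nu_j^N(g)-\mu_j(g))(\nu_j^N(h)-\mu_j(h))]$ from unbiasedness, bounds it by $\|g\|_j\|h\|_j\,\varepsilon_j^N$ via Cauchy--Schwarz, and then applies this three times together with Assumption~\ref{cjkbound} to assemble $R_{j,n}(f)\le c_{j,n}\|f\|_n^2(2+\|q_{j,j+1}(1)-1\|_j)$.
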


\begin{proof}
Note first that by the Cauchy-Schwarz inequality and since $\nu_j^N(\cdot)$ is an unbiased estimator for $\mu_j(\cdot)$, we have for any $g,h\in B(E_j)$
\begin{eqnarray}\label{abschfgc}
|\mathbb{E}[\nu_j^N(g)\nu_j^N(h)-\mu_j(g)\mu_j(h)]|\nonumber \\&\leq&
|\mu_j(g) \mathbb{E}[\nu_j^N(h)-\mu_j(h)]+\mu_j(h)\mathbb{E}[\nu_j^N(g)-\mu_j(g)]|\nonumber \\
&+&\mathbb{E}[|\nu_j^N(g)-\mu_j(g)||\nu_j^N(h)-\mu_j(h)|]
\nonumber \\&\leq&   \| g\|_j \| h\|_j\;\varepsilon_j^{N}.
\end{eqnarray}
Adding $\pm V_{j,n}(f)$ to the definition (\ref{VjnN}) of $V_{j,n}^N(f)$ and applying (\ref{abschfgc}) three times yields
\[
\mathbb{E}[V_{j,n}^N(f)] \leq V_{j,n}(f) + R_{j,n}(f) \;\varepsilon_j^{N} 
\]
with
\begin{eqnarray}
R_{j,n}(f)&=&\|1\|_j\|q_{j,n}(f)^2\|_j+\|q_{j,n}(f)\|_j^2+\|q_{j,n}(f^2)\|_j
\|q_{j,j+1}(1)-1\|_j.\nonumber
%&\leq& c_{j,n}\|f\|_n^2(2+\|q_{j,j+1}(1)-1\|_j)
\end{eqnarray}
Applying (\ref{cjninequality}) yields
\[
R_{j,n}(f)\leq c_{j,n}\|f\|_n^2(2+\|q_{j,j+1}(1)-1\|_j)
\]
and thus the desired inequality.
\end{proof}

In order to state the main result of this section, we need a few more definitions. Define $\widehat{c}_{k}$ and $\widehat{v}_k$ by
\[
\widehat{c}_{k}=\sum_{j=0}^{k-1}c_{j,k} \Big(2+\| q_{j,j+1}(1)-1 \|_j\Big)
\]
and
\[
\widehat{v}_{k}=\sup\left\{\left.
\sum_{j=0}^k \text{Var}_{\mu_j}(q_{j,k}(f)) \right| \|f\|_k\leq 1
 \right\}.
\]
Furthermore define
\[
\overline{c}_k=\max_{j\leq k} \widehat{c}_{j},\;\;\;\;\; \overline{v}_k=\max_{j\leq k} \widehat{v}_{j}\;\;\; \text{ and }\;\;\; \overline{\varepsilon}_k^{N}= \max_{j\leq k} \varepsilon_j^N.
\]
Then we have the following bound on the approximation error:\bigskip

\begin{thm}\label{thmBound}
For $f\in B(E_n)$ we have
\begin{equation}\label{thmf1c}
N \mathbb{E}[|\nu_n^N(f)-\mu_n(f)|^2] \leq \sum_{j=0}^n \text{\textnormal{Var}}_{\mu_j}(q_{j,n}(f))+\| f \|_n^2 \widehat{c}_{n}\;  \overline{\varepsilon}_n^{N}.
\end{equation}
Moreover, if $N\geq 2 \overline{c}_n$ we have
\begin{equation}\label{thmf2c}
\overline{\varepsilon}_n^{N}\leq 2\frac{\overline{v}_n}{N}.
\end{equation}
\end{thm}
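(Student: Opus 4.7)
The first bound (\ref{thmf1c}) comes from combining Proposition \ref{propVarNu} and Proposition \ref{propVjncjn}. Proposition \ref{propVarNu} gives
\[
N\,\mathbb{E}[|\nu_n^N(f)-\mu_n(f)|^2] \;=\; \textnormal{Var}_{\mu_n}(f) \;+\; \sum_{j=0}^{n-1}\mathbb{E}[V_{j,n}^N(f)].
\]
Applying Proposition \ref{propVjncjn} to each term yields $\mathbb{E}[V_{j,n}^N(f)]\le \textnormal{Var}_{\mu_j}(q_{j,n}(f)) + c_{j,n}\|f\|_n^2(2+\|q_{j,j+1}(1)-1\|_j)\,\varepsilon_j^N$. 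Summing, pulling out a single factor of $\max_{j<n}\varepsilon_j^N$, and recognizing the definition of $\widehat{c}_n$ produces the sum of the variances $\sum_{j=0}^{n-1}\textnormal{Var}_{\mu_j}(q_{j,n}(f))$ plus $\|f\|_n^2\,\widehat{c}_n\,\max_{j<n}\varepsilon_j^N$. Since $q_{n,n}(f)=f$, the leading $\textnormal{Var}_{\mu_n}(f)$ extends the sum to $j=n$, and $\max_{j<n}\varepsilon_j^N\le \overline{\varepsilon}_n^N$ gives (\ref{thmf1c}).

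\textbf{Second inequality.} The natural difficulty is that the right-hand side of (\ref{thmf1c}) involves $\overline{\varepsilon}_n^N$, which also dominates the left-hand side $\varepsilon_n^N$, so a direct rearrangement would be circular unless we know a priori that $\overline{\varepsilon}_n^N<\infty$. I would sidestep this by retaining the sharper bound $N\varepsilon_k^N \le \widehat{v}_k + \widehat{c}_k \max_{j<k}\varepsilon_j^N$ (the intermediate step in the proof of (\ref{thmf1c}) applied at level $k$) and proving $\varepsilon_k^N\le 2\overline{v}_n/N$ by finite induction on $k\in\{0,\dots,n\}$.

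The base case $k=0$ uses that $\nu_0^N=\eta_0^N$ is the empirical measure of $N$ i.i.d.\ draws from $\mu_0$, giving $\varepsilon_0^N= \sup_{\|f\|_0\le 1}\textnormal{Var}_{\mu_0}(f)/N \le \widehat{v}_0/N \le \overline{v}_n/N$. For the inductive step, assume $\varepsilon_j^N\le 2\overline{v}_n/N$ for all $j<k$. Then by the sharpened (\ref{thmf1c}),
\[
N\varepsilon_k^N \;\le\; \widehat{v}_k + \widehat{c}_k\cdot\tfrac{2\overline{v}_n}{N} \;\le\; \overline{v}_n + \overline{c}_n\cdot\tfrac{2\overline{v}_n}{N} \;\le\; \overline{v}_n + \overline{v}_n \;=\; 2\overline{v}_n,
\]
where the final step uses $2\overline{c}_n\le N$. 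Taking the maximum over $k\le n$ yields $\overline{\varepsilon}_n^N\le 2\overline{v}_n/N$.

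\textbf{Main obstacle.} The only subtle point is the self-referential nature of (\ref{thmf1c}): its right-hand side involves $\overline{\varepsilon}_n^N$, which in turn dominates the left-hand side. Resolving this cleanly requires either an a priori finiteness argument (doable but somewhat awkward since the norm $\|\cdot\|_n$ on $B(E_n)$ need not control $\|f\|_{\textnormal{sup},n}$ and hence need not give a trivial a priori bound on $\varepsilon_n^N$), or the induction strategy described above, which only uses the sharper estimate with $\max_{j<k}\varepsilon_j^N$ rather than $\overline{\varepsilon}_k^N$. I expect the inductive route to be both cleaner and more general.
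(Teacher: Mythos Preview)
Your argument for (\ref{thmf1c}) is exactly the paper's: combine Proposition~\ref{propVarNu} with Proposition~\ref{propVjncjn}, bound each $\varepsilon_j^N$ by $\overline{\varepsilon}_n^N$, and identify $\widehat{c}_n$.

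For (\ref{thmf2c}) the paper does precisely the direct rearrangement you were worried about: it applies (\ref{thmf1c}) at each level $k\le n$, takes suprema to obtain $N\overline{\varepsilon}_n^N\le \overline{v}_n+\overline{c}_n\,\overline{\varepsilon}_n^N$, and then subtracts $\overline{c}_n\,\overline{\varepsilon}_n^N$ from both sides using $N-\overline{c}_n\ge N/2$. Your concern is legitimate: this last step tacitly presumes $\overline{\varepsilon}_n^N<\infty$, which is not guaranteed at the generality of Section~\ref{secQuantCBG} since $\|\cdot\|_j$ is an arbitrary norm on $B(E_j)$ and need not control the supremum norm. Your inductive route sidesteps this cleanly: the base case $\varepsilon_0^N=\widehat v_0/N$ is finite whenever $\overline{v}_n<\infty$ (and if $\overline{v}_n=\infty$ the claim is vacuous), and the sharper estimate $N\varepsilon_k^N\le \widehat v_k+\widehat c_k\max_{j<k}\varepsilon_j^N$ together with the hypothesis $N\ge 2\overline{c}_n$ propagates the bound $\varepsilon_k^N\le 2\overline{v}_n/N$ forward without any a~priori finiteness assumption. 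So the paper's proof is shorter but has a small gap that your induction closes; your version is the more robust of the two.
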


\begin{proof}[Proof of Theorem \ref{thmBound}]
Note that by Propositions \ref{propVarNu} and \ref{propVjncjn} and by the definition of $V_{j,n}(f)$ we get
\begin{eqnarray}
&&N \mathbb{E}[|\nu_n^N(f)-\mu_n(f)|^2]\nonumber\\ &\leq& \sum_{j=0}^n\text{Var}_{\mu_j}(q_{j,n}(f)) +\| f \|_{n}^2 
\sum_{j=0}^{n-1} c_{j,n} (2+\|q_{j,j+1}(1)-1 \|_j )\;\varepsilon_j^{N}\nonumber.
\end{eqnarray}
Bounding $\varepsilon_j^{N}$ by $\overline{\varepsilon}_n^{N}$ and inserting the definition of $\widehat{c}_n$ shows (\ref{thmf1c}). Optimizing (\ref{thmf1c}) over $f$ with $\|f\|_n\leq 1$ and over $n$ yields
\[
N \overline{\varepsilon}_n^{N} \leq \overline{v}_n +\overline{c}_n \; \overline{\varepsilon}_n^{N}. 
\]
Choosing $N\geq 2\,\overline{c}_n$ and thus $N-\overline{c}_n \geq \frac{N}{2}$ gives (\ref{thmf2c}).
\end{proof}

\section{$L_p$-stability}\label{LPglobal}

In this section, we show how $L_p$-stability of the Feynman-Kac propagators $q_{j,k}$ can be derived from global mixing properties of the MCMC kernels $K_k$. We now consider a more restricted setting where the measures $\mu_k$ live on a common state space and where the transition kernels $K_k$ are stationary with respect to the measures $\mu_k$.\bigskip

\subsection{The setting}\label{Lpsetting}

We now add the assumption that all the distributions $\mu_k$ live on the same support $E_k=E$. Moreover, we assume stationarity of the kernels $K_k$: For $1 \leq k \leq n$, let $K_k(x,A)$ be an integral operator with $K_k(\cdot,f)\in B(E)$ for all $f\in B(E)$, with $K_k(x,\cdot)\in M_1(E)$ for all $x\in E$ and with stationary distribution $\mu_k$, i.e.,
\[
\mu_k K_k (A)=\mu_k (A)\;\;\;\;\;\;\text{for all}\;\;\;\;A\in \mathcal{B}(E).
\]
Thus, $K_k$ can be thought of, e.g., as many steps of a Metropolis chain with respect target $\mu_k$. Accordingly, the $\mu_k$ are related through
\[
\mu_k(f)= \frac{\mu_{k-1}(g_{k-1,k} f)}{\mu_{k-1}(g_{k-1,k})}
\]
so that the functions $g_{k-1,k}\in B(E)$ are (unnormalized) relative densities between $\mu_{k-1}$ and $\mu_k$. In the notation of Section \ref{tmvmodel} this corresponds to $\hat{\mu}_k=\mu_k$ for all $k$.\bigskip

We now want to apply the error bound of Theorem \ref{thmBound}. We thus need to define a series of norms $\|\cdot\|_j$ on $B(E)$ and find constants $c_{j,k}$ such that the inequality 
\begin{equation}\label{cjkinequalityp}
\max\left(\|q_{j,k}(f)\|_j^2,\|q_{j,k}(f)^2\|_j,\|q_{j,k}(f^2)\|_j\right)  \leq c_{j,k}\;\|f\|_k^2
\end{equation}
from Assumption \ref{cjkbound} is satisfied for all $f\in B(E)$.\bigskip

We fix $p>2$ and choose $\|\cdot\|_j=\|\cdot\|_{L_p(\mu_j)}$, i.e.,
\[
\|f\|_j=\|f\|_{L_p(\mu_j)}=\mu_j(|f|^p)^{\frac1p}.
\]
Note that all bounded measurable functions are in $L_p(\mu_j)$ and that for $f \in B(E)$ we have $q_{j,k}(f)\in B(E)$ since we assumed the relative densities $g_{k-1,k}$ to be bounded. Now define $\widetilde{c}_{j,k}(p,q)$ to be the constant in an $L_p$-$L_q$-bound for $q_{j,k}$: For $p>  q> 1$ and $0\leq j< k\leq n$ we have
\[
\| q_{j,k}(f)\|_{L_p(\mu_j)} \leq
\widetilde{c}_{j,k}(p,q) \| f\|_{L_{q}(\mu_k) } \;\;\text{ for all } f\in B(E).
\]
Such constants will be studied in Section \ref{secStabFK} below. The next lemma shows how the constants $\widetilde{c}_{j,k}$ can be used to verify a version of Assumption \ref{cjkbound}:
%This choice of $c_{j,k}(p)$ satisfies (\ref{cjkinequalityp}):\bigskip
\begin{lem}\label{cjklemma}
Define for some $p>2$ and all $0\leq j< k\leq n$
\[
c_{j,k}(p)=\max\left(\widetilde{c}_{j,k}\left(p,\frac{p}{2}\right),\;\;\widetilde{c}_{j,k}(2p,p)^2\right).
\]
Then (\ref{cjkinequalityp}) is satisfied with $\|\cdot\|_j=\|\cdot\|_{L_p(\mu_j)}$ and with constants $c_{j,k}(p)$, i.e., for all $f\in B(E)$ we have
\[
\max\left(\|1\|_{L_p(\mu_j)} \|q_{j,k}(f)^2\|_{L_p(\mu_j)},\|q_{j,k}(f)\|_{L_p(\mu_j)}^2,\|q_{j,k}(f^2)\|_{L_p(\mu_j)} \right)  \leq c_{j,k}(p)\;\|f\|_{L_p(\mu_k)}^2.
\]
\end{lem}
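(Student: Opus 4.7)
The plan is to bound each of the three quantities on the left-hand side separately, using in each case one of the two $L_p$-$L_q$-stability constants defined in the statement, and then combine them by taking the maximum.

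First I would use the fact that $\mu_j$ is a probability measure to note $\|1\|_{L_p(\mu_j)}=1$, so the first term in the maximum equals $\|q_{j,k}(f)^2\|_{L_p(\mu_j)}$. The key identity here is the norm rescaling
\[
\|q_{j,k}(f)^2\|_{L_p(\mu_j)}=\mu_j(|q_{j,k}(f)|^{2p})^{1/p}=\|q_{j,k}(f)\|_{L_{2p}(\mu_j)}^{2},
\]
to which I apply the $L_{2p}$-$L_p$-stability bound of the propagator to obtain
\[
\|q_{j,k}(f)^2\|_{L_p(\mu_j)}\leq \widetilde{c}_{j,k}(2p,p)^{2}\,\|f\|_{L_p(\mu_k)}^{2}.
\]

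Next, for the middle term $\|q_{j,k}(f)\|_{L_p(\mu_j)}^{2}$, since $\mu_j$ is a probability measure, Jensen's inequality gives monotonicity of $L_q(\mu_j)$-norms in $q$, so $\|q_{j,k}(f)\|_{L_p(\mu_j)}\leq \|q_{j,k}(f)\|_{L_{2p}(\mu_j)}$, and the same $L_{2p}$-$L_p$ bound yields the same estimate $\widetilde{c}_{j,k}(2p,p)^{2}\,\|f\|_{L_p(\mu_k)}^{2}$. For the third term, I exploit the analogous identity $\|f^2\|_{L_{p/2}(\mu_k)}=\|f\|_{L_p(\mu_k)}^{2}$ and apply the $L_p$-$L_{p/2}$-stability bound directly to $f^2$:
\[
\|q_{j,k}(f^2)\|_{L_p(\mu_j)}\leq \widetilde{c}_{j,k}(p,p/2)\,\|f^2\|_{L_{p/2}(\mu_k)}=\widetilde{c}_{j,k}(p,p/2)\,\|f\|_{L_p(\mu_k)}^{2}.
\]

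Taking the maximum of the three bounds then gives exactly $c_{j,k}(p)\|f\|_{L_p(\mu_k)}^{2}$ with $c_{j,k}(p)=\max\bigl(\widetilde{c}_{j,k}(p,p/2),\widetilde{c}_{j,k}(2p,p)^{2}\bigr)$, which is what was claimed. There is no genuine obstacle here; the proof is a bookkeeping exercise matching the exponents $2p,p,p/2$ so that the $L_p$-$L_q$-stability inequalities supplied by $\widetilde{c}_{j,k}$ can be applied, combined with the elementary observation that squaring a function halves the exponent at which its norm is computed.
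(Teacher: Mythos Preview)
Your proof is correct and follows essentially the same approach as the paper: both use the identity $\|g^2\|_{L_p}=\|g\|_{L_{2p}}^2$ together with the $L_{2p}$-$L_p$ stability constant for the first two terms (combined with monotonicity of $L_q$-norms on a probability space for the middle term) and the $L_p$-$L_{p/2}$ stability constant applied to $f^2$ for the third term. The only difference is the order in which the three terms are treated.
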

\begin{proof}
Observe first that we have
\[
\|q_{j,k}(f^2)\|_{L_p(\mu_j)} \leq \widetilde{c}_{j,k}\left(p,\frac{p}2\right) \, \|f^2\|_{L_\frac{p}{2}(\mu_k)}=\widetilde{c}_{j,k}\left(p,\frac{p}2\right) \, \|f\|_{L_p(\mu_k)}^2.
\]
Furthermore we have $\|1\|_{L_p(\mu_j)}=1$ and
\[
\|q_{j,k}(f)^2\|_{L_p(\mu_j)}=\|q_{j,k}(f)\|_{L_{2p}(\mu_j)}^2 \leq \widetilde{c}_{j,k}(2p,p)^2 \|f\|_{L_{p}(\mu_k)}^2.
\]
Finally, observing that 
\[
\|q_{j,k}(f)\|_{L_{p}(\mu_j)}^2\leq \|q_{j,k}(f)\|_{L_{2p}(\mu_j)}^2
\]
concludes the proof.
\end{proof}

\subsection{$L_p$-Stability of Feynman-Kac Propagators}\label{secStabFK}

In this section we show how to derive inequalities of the type
\begin{equation}\label{zielineq}
\| q_{j,k}(f)\|_{L_p(\mu_j)} \leq \widetilde{c}_{j,k}(p,q)\|f\|_{L_q(\mu_k)}\;\;\;\;\text{for}\;\;\;\; p \leq q, j<k 
\end{equation}
from suitable mixing conditions on the kernels $K_j,\ldots, K_k$. These are exactly the inequalities we need in order to make the error bounds of the previous section explicit. The constants $\widetilde{c}_{j,k}$ we derive are independent of the length of the time interval $k-j$. The central intermediate step is Theorem \ref{propLpIt} which derives (\ref{zielineq}) for the case $p=q=2^r$, $r\in \mathbb{N}$, and for a modified propagator $\hat{q}_{j,k}$ from an $L_2$-mixing condition for the kernels $K_j,\ldots, K_k$. The proof of Theorem \ref{propLpIt} proceeds in a number of lemmas starting with only one step, $k-j=1$, and $p=2$ and then gradually generalizing the result by showing how to proceed from $L_p$-stability to $L_{2p}$-stability and to more than one step, $k-j >1$. Theorem \ref{propLpIt} is followed by a number of corollaries, showing how to transfer the result to the original propagator $q_{j,k}$, and, using an additional assumption of hyperboundedness, to the case $p>q$. Corollary \ref{corNEULP} collects these observations and states the resulting version of inequality (\ref{zielineq}). We close the section with an additional result, Proposition \ref{expdecay}, which shows that for functions $f$ with $\mu_k(f)=0$ we can obtain a version of inequality (\ref{zielineq}) where the constants $\widetilde{c}_{j,k}(p,q)$ decay exponentially in the length of the time interval $k-j$.\bigskip

Throughout this section we assume a uniform upper bound $\gamma$ on the normalized relative densities $\overline{g}_{k-1,k}$ given by 
\[
\overline{g}_{k-1,k}=\frac{g_{k-1,k}}{\mu_{k-1}(g_{k-1,k})}.
\]

\begin{ass}\label{ass1}
There exists $\gamma>1$ such that
\begin{equation}\label{gammabdbd}
\overline{g}_{k-1,k}(x)\leq \gamma
\end{equation}
for all $x\in E$ and all $k$ with $1\leq k \leq n$. 
\end{ass}

$\gamma$ is a rough measure of how strongly the measures $\mu_k$ differ from each other. For the analysis of this section it proves to be convenient not to work with $q_{j,k}$ directly but to work with $\hat{q}_{j,k}$ defined as follows: For $1 \leq k \leq n$ define  $\hat{q}_{k-1,k}:B(E)\rightarrow B(E)$ by
\[
\hat{q}_{k-1,k}(f)=K_{k-1}\left(\overline{g}_{k-1,k} f \right). %\;\;\;\; \forall f \in B(E).
\]
Furthermore, define for $1 \leq j < k \leq n$ the mapping $\hat{q}_{j,k}:B(E)\rightarrow B(E)$ by
\[
\hat{q}_{j,k}(f)=\hat{q}_{j,j+1}(\hat{q}_{j+1,j+2}(\ldots \hat{q}_{k-1,k}(f)))\;\;\;\text{ and }\;\; \hat{q}_{k,k}(f)=f.
\]
By definition $\hat{q}_{j,k}$ is a semigroup. $q_{j,k}$ and $\hat{q}_{j+1,k}$ are related through 
\[
q_{j,k}(f)=\overline{g}_{j,j+1}\hat{q}_{j+1,k}(K_{k}(f)).
\]
Stability results for $\hat{q}_{j,k}$ can be transfered to $q_{j,k}$ using Lemma \ref{lemqhatq} which is stated and proved later in this section.\bigskip

$L_1$-stability of $\hat{q}_{j,k}$ simply follows from
\begin{equation}\label{L1it}
\| \hat{q}_{j,k}(f) \|_{L_1(\mu_j)}=\mu_j(|\hat{q}_{j,k}(f)|)\leq \mu_j(\hat{q}_{j,k}(|f|))=\mu_k(|f|)=\| f \|_{L_1(\mu_k)}.
\end{equation}

From (\ref{gammabdbd}) and from the fact that $\mu_k$ is a stationary distribution for $K_k$ it is easy to conclude bounds such as
\[
\| \hat{q}_{j,k}(f) \|_{L_2(\mu_j)}^2\leq \gamma^{k-j}\| f \|_{L_2(\mu_k)}^2.
\]
This bound has the major disadvantage that it degenerates exponentially in $k-j$ since $\gamma>1$. In the following, we assume and  exploit mixing properties of the kernels $K_k$ in order to obtain $L_p$-bounds for $p>1$ which do not degenerate in $k-j$.\bigskip 

Throughout this section, we assume the following mixing condition: 
\begin{ass}\label{ass2}
There are constants $\alpha\in (0,1)$ and $\beta \in [0,1]$ such that for all $f\in B(E)$ we have the following $L_2$-bound for $\hat{q}_{k-1,k}$:
\begin{equation}\label{alphabetabd}
\| \hat{q}_{k-1,k}(f) \|_{L_2(\mu_{k-1})}^2\leq \alpha \| f \|_{L_2(\mu_{k})}^2 + \beta \mu_k(f)^2.
\end{equation}
\end{ass}
Additionally, our results will impose conditions that $\alpha$ is sufficiently small. In Section \ref{secStabFKP} below it is shown that one way to ensure that (\ref{alphabetabd}) holds with a sufficiently small $\alpha$ is to assume that the kernels $K_k$ satisfy Poincaré inequalities associated with a sufficiently large spectral gap.\bigskip 

Our first step is to iterate (\ref{alphabetabd}) in order to obtain $L_2$-bounds for $\hat{q}_{j,k}$: 

\begin{lem}\label{lemL2It}
For $1\leq j< k \leq n$ and $f\in B(E)$ we have the bounds
\begin{equation}\label{L2It1}
\| \hat{q}_{j,k}(f) \|_{L_2(\mu_{j})}^2\leq 
\alpha^{k-j}\, \| f \|_{L_2(\mu_{k})}^2 +\frac{\beta}{1-\alpha }\, \mu_k(f)^2 
\end{equation}
and
\begin{equation}\label{L2It2}
\| \hat{q}_{j,k}(f) \|_{L_2(\mu_{j})}\leq 
 \frac{1}{(1-\alpha )^{\frac12}}\, \| f \|_{L_2(\mu_{k})}.
\end{equation}
\end{lem}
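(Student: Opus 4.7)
The plan is to iterate the one-step bound (\ref{alphabetabd}) using the semigroup identity $\hat{q}_{j,k} = \hat{q}_{j,j+1}\circ \hat{q}_{j+1,k}$. The crucial auxiliary fact is the identity $\mu_{j}(\hat{q}_{j,k}(f)) = \mu_k(f)$, which I would first verify. It follows by iterating its one-step form $\mu_{j'}(\hat{q}_{j',j'+1}(h)) = \mu_{j'+1}(h)$, which in turn is immediate from the definition $\hat{q}_{j',j'+1}(h) = K_{j'}(\overline{g}_{j',j'+1}h)$, the stationarity $\mu_{j'} K_{j'} = \mu_{j'}$, and the relation $\mu_{j'}(\overline{g}_{j',j'+1} h) = \mu_{j'+1}(h)$ built into the definition of the measures $\mu_k$.

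For the first bound (\ref{L2It1}), I would proceed by induction on $m = k-j$. The base case $m=1$ is assumption (\ref{alphabetabd}). For the inductive step, applying (\ref{alphabetabd}) with $g := \hat{q}_{j+1,k}(f)$ in place of $f$ yields
\[
\|\hat{q}_{j,k}(f)\|_{L_2(\mu_j)}^2 \leq \alpha \|\hat{q}_{j+1,k}(f)\|_{L_2(\mu_{j+1})}^2 + \beta\, \mu_{j+1}(\hat{q}_{j+1,k}(f))^2.
\]
Then I would replace the last term by $\beta \mu_k(f)^2$ using the auxiliary identity, and apply the inductive hypothesis to the first term on the right. Expanding the resulting geometric sum gives the bound $\alpha^m \|f\|_{L_2(\mu_k)}^2 + \beta \mu_k(f)^2 \sum_{i=0}^{m-1}\alpha^i$, and estimating the sum by $1/(1-\alpha)$ establishes (\ref{L2It1}).

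The proof of (\ref{L2It2}) is a small refinement of the same argument. Here I would keep the geometric series in its exact form, obtaining
\[
\|\hat{q}_{j,k}(f)\|_{L_2(\mu_j)}^2 \leq \alpha^{k-j} \|f\|_{L_2(\mu_k)}^2 + \beta\,\frac{1-\alpha^{k-j}}{1-\alpha}\, \mu_k(f)^2,
\]
and then dominate $\mu_k(f)^2 \leq \|f\|_{L_2(\mu_k)}^2$ by Jensen's inequality. The coefficient on the right becomes $C(k-j) := \alpha^{k-j} \cdot 1 + (1-\alpha^{k-j})\cdot \beta/(1-\alpha)$, which is a convex combination of $1$ and $\beta/(1-\alpha)$. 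The main subtlety is that $\beta/(1-\alpha)$ may exceed $1$, so $C(m)\le 1/(1-\alpha)$ is not immediate; however, both $1$ and $\beta/(1-\alpha)$ are at most $1/(1-\alpha)$ (using $\beta \le 1$ for the second), so the convex combination satisfies the same bound. Taking square roots gives (\ref{L2It2}).
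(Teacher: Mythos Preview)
Your proof is correct and follows essentially the same route as the paper: iterate the one-step bound (\ref{alphabetabd}) using the semigroup property and the identity $\mu_j(\hat{q}_{j,k}(f))=\mu_k(f)$, then control the resulting geometric sum. The only cosmetic difference is in (\ref{L2It2}), where the paper bounds $\beta\mu_k(f)^2\le \|f\|_{L_2(\mu_k)}^2$ immediately to get $\sum_{i=0}^{k-j}\alpha^i\,\|f\|_{L_2(\mu_k)}^2\le (1-\alpha)^{-1}\|f\|_{L_2(\mu_k)}^2$, while you arrive at the same conclusion via your convex-combination observation.
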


\begin{proof}
Iterating the bound (\ref{alphabetabd}) and utilizing that $\mu_j(\hat{q}_{j,k}(f))=\mu_k(f)$ we get
\[
\| \hat{q}_{j,k}(f) \|_{L_2(\mu_{j})}^2 \leq \alpha^{k-j}\| f \|_{L_2(\mu_{k})}^2+\sum_{i=0}^{k-j-1}\beta \alpha^i \mu_k(f)^2.
\]
Applying to this the geometric series inequality immediately implies (\ref{L2It1}). Furthermore, since we assumed $\beta \leq 1 $ and $\mu_k(f)^2\leq \| f \|_{L_2(\mu_{k})}^2$ we have
\[
\| \hat{q}_{j,k}(f) \|_{L_2(\mu_{j})}^2 \leq \sum_{i=0}^{k-j}\alpha^i \| f \|_{L_2(\mu_{k})}^2.
\]
Applying again the geometric series inequality yields (\ref{L2It2}).
\end{proof}

We now turn to $L_p$-bounds for the case of $p=2^r$ with $r\in \mathbb{N}$. We proceed inductively, deducing the bound for $p=2^r$ from the bound for $p=2^{r-1}$. We begin by deriving an $L_{2p}$-bound for $\hat{q}_{k-1,k}$ from (\ref{alphabetabd}).\bigskip

\begin{lem}\label{lemL2pS1}
For $1\leq k \leq n$, $f\in B(E)$ and $p\geq 1$ we have
\[
\| \hat{q}_{k-1,k}(f) \|_{L_{2p} (\mu_{k-1})}^{2p}\leq \alpha \,\gamma^{2p-2} \| f \|_{L_{2p}(\mu_{k})}^{2p} +\beta \,\gamma^{2p-2} \| f \|_{L_p(\mu_{k})}^{2p}.
\]
\end{lem}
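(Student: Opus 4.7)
The plan is to reduce the $L_{2p}$-bound on $\hat{q}_{k-1,k}(f)$ to the given $L_2$-bound \eqref{alphabetabd} applied to $|f|^p$, at the cost of a factor of $\gamma^{2p-2}$ picked up via Jensen's inequality and the pointwise bound on $\overline{g}_{k-1,k}$. Throughout I may assume $f\geq 0$ by replacing $f$ with $|f|$ whenever needed.

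First I would apply Jensen's inequality pointwise to the probability kernel $K_{k-1}(x,\cdot)$ with the convex function $t\mapsto |t|^p$ (valid since $p\geq 1$), obtaining
\[
|\hat{q}_{k-1,k}(f)(x)|^p \;=\; |K_{k-1}(x,\overline{g}_{k-1,k}f)|^p \;\leq\; K_{k-1}\!\left(x, \overline{g}_{k-1,k}^{\,p}\,|f|^p\right).
\]
Next I would invoke Assumption \ref{ass1}, which gives $\overline{g}_{k-1,k}^{\,p}\leq \gamma^{p-1}\overline{g}_{k-1,k}$, so that
\[
|\hat{q}_{k-1,k}(f)(x)|^p \;\leq\; \gamma^{p-1}\, K_{k-1}\!\left(x, \overline{g}_{k-1,k}\,|f|^p\right) \;=\; \gamma^{p-1}\, \hat{q}_{k-1,k}(|f|^p)(x).
\]

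Squaring and integrating against $\mu_{k-1}$ then gives
\[
\|\hat{q}_{k-1,k}(f)\|_{L_{2p}(\mu_{k-1})}^{2p} \;=\; \bigl\||\hat{q}_{k-1,k}(f)|^p\bigr\|_{L_2(\mu_{k-1})}^{2} \;\leq\; \gamma^{2p-2}\,\|\hat{q}_{k-1,k}(|f|^p)\|_{L_2(\mu_{k-1})}^{2}.
\]
At this point I would apply the standing mixing assumption \eqref{alphabetabd} to the function $|f|^p\in B(E)$, yielding
\[
\|\hat{q}_{k-1,k}(|f|^p)\|_{L_2(\mu_{k-1})}^{2} \;\leq\; \alpha\, \||f|^p\|_{L_2(\mu_k)}^{2} + \beta\, \mu_k(|f|^p)^2.
\]
Identifying $\||f|^p\|_{L_2(\mu_k)}^2 = \mu_k(|f|^{2p}) = \|f\|_{L_{2p}(\mu_k)}^{2p}$ and $\mu_k(|f|^p)^2 = \|f\|_{L_p(\mu_k)}^{2p}$ and chaining the two displays produces the claim.

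The only real content is the passage $|\hat{q}_{k-1,k}(f)|^p \leq \gamma^{p-1}\hat{q}_{k-1,k}(|f|^p)$, which combines Jensen's inequality with the uniform bound on $\overline{g}_{k-1,k}$ in a way that leaves one copy of $\overline{g}_{k-1,k}$ (rather than none) inside the integral; retaining this single copy is what lets us apply the mixing hypothesis directly and is the reason the exponent on $\gamma$ comes out as $2p-2$ rather than $2p$ or $2p-1$. The remaining manipulations are just Jensen and bookkeeping of $L^q$ norms, so I expect no further difficulty.
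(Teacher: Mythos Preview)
Your proof is correct and follows essentially the same approach as the paper: reduce to the $L_2$-bound \eqref{alphabetabd} via Jensen's inequality on $K_{k-1}(x,\cdot)$, and use Assumption~\ref{ass1} to extract the factor $\gamma^{2p-2}$. The only cosmetic difference is that the paper first rewrites the Jensen step as $\|\hat q_{k-1,k}(\overline g_{k-1,k}^{\,p-1}|f|^p)\|_{L_2(\mu_{k-1})}^2$, applies \eqref{alphabetabd} to $\overline g_{k-1,k}^{\,p-1}|f|^p$, and only then bounds $\overline g_{k-1,k}^{\,p-1}\leq\gamma^{p-1}$, whereas you peel off the $\gamma^{p-1}$ before invoking \eqref{alphabetabd}; the two orderings yield identical bounds.
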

\begin{proof}
Note that we have
\begin{eqnarray}
 \| \hat{q}_{k-1,k}(f) \|_{L_{2p} (\mu_{k-1})}^{2p} &=& \mu_{k-1}(|K_{k-1}(\overline{g}_{k-1,k}f)|^{2p})\nonumber\\
&\leq& \mu_{k-1}(K_{k-1}(\overline{g}_{k-1,k}^p|f|^p)^2)\nonumber\\&=& \| \hat{q}_{k-1,k}(\overline{g}_{k-1,k}^{p-1} |f|^p) \|_{L_2 (\mu_{k-1})}^2. \nonumber
\end{eqnarray}
Applying now the bound (\ref{alphabetabd}) yields
\begin{eqnarray}
\| \hat{q}_{k-1,k}(f) \|_{L_{2p} (\mu_{k-1})}^{2p} &\leq& \alpha \| \overline{g}_{k-1,k}^{p-1} |f|^p \|_{L_2 (\mu_{k})}^{2}+\beta \| \overline{g}_{k-1,k}^{p-1} |f|^p \|_{L_1 (\mu_{k})}^{2}  \nonumber\\
&\leq& \alpha \gamma^{2p-2} \| f \|_{L_{2p} (\mu_{k})}^{2p} +\beta \gamma^{2p-2} \| f \|_{L_p (\mu_{k})}^{2p}.
\end{eqnarray}
\end{proof}

Our next step is to show how by applying Lemma \ref{lemL2pS1} we get an $L_{2p}$-bound for $\hat{q}_{j,k}$ from an $L_{p}$-bound.\bigskip

\begin{lem}\label{lemL2pLpIt}
Assume that $\alpha \gamma^{2p-2}<1$ and that for $\delta(p)\geq 1$ we have for $1\leq j< k \leq n$ and $f\in B(E)$ the inequality
\[
\|\hat{q}_{j,k}(f)\|_{L_{p} (\mu_{j})}\leq \delta(p)\|f\|_{L_{p} (\mu_{k})}. 
\]
Then we have
\[
\|\hat{q}_{j,k}(f)\|_{L_{2p} (\mu_{j})}\leq \delta(2p)\|f\|_{L_{2p} (\mu_{k})} 
\]
with
\[
\delta(2p)=\delta(p) \frac{\gamma^{1-\frac1p}}{(1-\alpha \gamma^{2p-2})^{\frac{1}{2p}}}.
\]
\end{lem}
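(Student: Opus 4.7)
The plan is to prove the lemma by induction on $k-j$, exploiting both the one-step $L_{2p}$ bound from Lemma \ref{lemL2pS1} and the hypothesized uniform $L_p$ bound. Concretely, fix $k$ and induct downward on $j\in\{k,k-1,\ldots,1\}$. The base case $j=k$ is trivial since $\hat{q}_{k,k}=\text{id}$ and $\delta(2p)\geq 1$ (which I would verify at the outset from the defining formula, using $\delta(p)\geq 1$, $\gamma>1$, and $\alpha\gamma^{2p-2}<1$). For the inductive step at $j<k$, I would use the semigroup property to write $\hat{q}_{j,k}(f)=\hat{q}_{j,j+1}(\hat{q}_{j+1,k}(f))$ and feed this into Lemma \ref{lemL2pS1}.

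Peeling off the first step via Lemma \ref{lemL2pS1} (with $f$ replaced by $\hat{q}_{j+1,k}(f)$) yields
\[
\|\hat{q}_{j,k}(f)\|_{L_{2p}(\mu_j)}^{2p}\leq \alpha\gamma^{2p-2}\|\hat{q}_{j+1,k}(f)\|_{L_{2p}(\mu_{j+1})}^{2p}+\beta\gamma^{2p-2}\|\hat{q}_{j+1,k}(f)\|_{L_p(\mu_{j+1})}^{2p}.
\]
I would then bound the $L_{2p}$-term by the inductive hypothesis (or the trivial identity when $j+1=k$), and the $L_p$-term by the $L_p$-$L_p$ hypothesis of the lemma, giving respectively $\delta(2p)^{2p}\|f\|_{L_{2p}(\mu_k)}^{2p}$ and $\delta(p)^{2p}\|f\|_{L_p(\mu_k)}^{2p}$. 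Using $\beta\leq 1$ and the Jensen/H\"older inequality $\|f\|_{L_p(\mu_k)}\leq \|f\|_{L_{2p}(\mu_k)}$ (valid since $\mu_k$ is a probability measure), I can replace the $L_p$ norm by the $L_{2p}$ norm at the cost of a factor at most $\gamma^{2p-2}\delta(p)^{2p}$.

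The resulting bound takes the form
\[
\|\hat{q}_{j,k}(f)\|_{L_{2p}(\mu_j)}^{2p}\leq \bigl(\alpha\gamma^{2p-2}\delta(2p)^{2p}+\gamma^{2p-2}\delta(p)^{2p}\bigr)\|f\|_{L_{2p}(\mu_k)}^{2p},
\]
and the inductive step closes precisely when $\delta(2p)^{2p}$ satisfies the fixed-point relation $(1-\alpha\gamma^{2p-2})\delta(2p)^{2p}=\gamma^{2p-2}\delta(p)^{2p}$, which rearranges to the stated formula. The condition $\alpha\gamma^{2p-2}<1$ is exactly what makes this fixed point finite and positive.

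The main obstacle is not a technical one but conceptual: one must notice that the inductive hypothesis and the hypothesis of the lemma are used on different factors of the same right-hand side (the $L_{2p}$ iteration uses the new $\delta(2p)$ while the $L_p$ remainder uses the old $\delta(p)$), and that the definition of $\delta(2p)$ is tailored so that the induction closes with equality. A secondary care-point is that the estimate on the $L_p$ term is uniform in $j$, which is needed because $\beta$ may be close to one and no exponential decay in $k-j$ is available along that branch; using Jensen to dominate the $L_p$ norm by the $L_{2p}$ norm is what allows the two contributions to be combined into a single $L_{2p}$-$L_{2p}$ estimate.
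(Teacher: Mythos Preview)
Your proof is correct and is essentially the same argument as the paper's, only organized as a downward induction with a fixed-point characterization of $\delta(2p)$ instead of an explicit iteration followed by a geometric-series bound. The paper iterates Lemma~\ref{lemL2pS1} to obtain $\|\hat q_{j,k}(f)\|_{L_{2p}(\mu_j)}^{2p}\le \gamma^{2p-2}\delta(p)^{2p}\|f\|_{L_{2p}(\mu_k)}^{2p}\sum_{i=0}^{k-j}(\alpha\gamma^{2p-2})^{i}$ and then sums the series, which is exactly your fixed-point relation $(1-\alpha\gamma^{2p-2})\delta(2p)^{2p}=\gamma^{2p-2}\delta(p)^{2p}$ in unrolled form; the ingredients (Lemma~\ref{lemL2pS1}, the $L_p$ hypothesis, $\beta\le1$, Jensen, and $\alpha\gamma^{2p-2}<1$) are identical.
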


\begin{proof}
Define $\theta=\alpha \gamma^{2p-2}$. Iterating the inequality of Lemma \ref{lemL2pS1} and utilizing that $\beta \leq 1$, we get
\begin{equation}\label{L2pIneqIt}
\|\hat{q}_{j,k}(f)\|_{L_{2p} (\mu_{j})}^{2p}\leq \theta^{k-j} \|f\|_{L_{2p} (\mu_{k})}^{2p}+\gamma^{2p-2}\sum_{i=j+1}^{k}\theta^{i-1-j} \|\hat{q}_{i,k}(f)\|_{L_{p} (\mu_{i})}^{2p}
\end{equation}
Using our assumption on $\|\hat{q}_{j,k}\|_{L_{p}(\mu_{j})}$ and the facts that $\|f\|_{L_{p}(\mu_{k})}\leq \|f\|_{L_{2p}(\mu_{k})}$, $\gamma\geq 1$ and $\delta(p)\geq 1$, we get that
\[
\|\hat{q}_{j,k}(f)\|_{L_{2p} (\mu_{j})}^{2p}\leq \|f\|_{L_{2p} (\mu_{k})}^{2p} \gamma^{2p-2}\delta(p)^{2p}  \sum_{i=0}^{k-j}\theta^{i}. 
\]
Thus, since we assumed $\theta<1$, by the geometric series inequality we have
\[
\|\hat{q}_{j,k}(f)\|_{L_{2p} (\mu_{j})}\leq \delta(2p)\|f\|_{L_{2p} (\mu_{k})} 
\]
with
\[
\delta(2p)=\delta(p) \frac{\gamma^{1-\frac1p}}{(1-\alpha\gamma^{2p-2})^{\frac{1}{2p}} }.
\]
\end{proof}

Combining Lemmas \ref{lemL2It} and \ref{lemL2pLpIt} we can state the key result of this section as follows:\bigskip

\begin{thm}\label{propLpIt}
For $r\in \mathbb{N}$, consider $p=2^r$ and assume that $\alpha \gamma^{p-2}<1$. Then we have for $1\leq j< k \leq n$ and $f\in B(E)$ the inequality
\[
\|\hat{q}_{j,k}(f)\|_{L_{p} (\mu_{j})}\leq \delta(p)\|f\|_{L_{p} (\mu_{k})} 
\]
with
\[
\delta(p)=\prod_{i=1}^r \frac{\gamma^{1-2^{-(i-1)}}}{(1-\alpha \gamma^{2^i-2})^{2^{-i}}}< \frac{\gamma^{r-2+2^{-(r-1)}}}{1-\alpha\gamma^{2^r-2}}
\]
\end{thm}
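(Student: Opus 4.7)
The plan is to prove the bound by induction on $r$, since Lemma \ref{lemL2pLpIt} is precisely a doubling step: if we have an $L_p$-bound with constant $\delta(p)$, we get an $L_{2p}$-bound with constant $\delta(2p) = \delta(p)\,\gamma^{1-1/p}/(1-\alpha\gamma^{2p-2})^{1/(2p)}$, provided the smallness condition $\alpha\gamma^{2p-2}<1$ holds. The base case $r=1$ is exactly inequality (\ref{L2It2}) of Lemma \ref{lemL2It}, which gives $\|\hat q_{j,k}(f)\|_{L_2(\mu_j)} \leq (1-\alpha)^{-1/2} \|f\|_{L_2(\mu_k)}$; this matches the stated formula since $\delta(2) = \gamma^{0}/(1-\alpha\gamma^{0})^{1/2}$.

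For the inductive step, assuming the bound at level $p/2 = 2^{r-1}$, I would apply Lemma \ref{lemL2pLpIt}. The hypothesis there requires $\alpha\gamma^{2(p/2)-2} = \alpha\gamma^{p-2} < 1$, which is our standing assumption; moreover, since $\gamma \geq 1$, this also guarantees that all intermediate conditions $\alpha\gamma^{2^i-2} < 1$ for $i=1,\dots,r$ hold. Iterating the doubling recursion from $i=1$ up to $i=r$ telescopes into
\[
\delta(2^r) = \prod_{i=1}^{r} \frac{\gamma^{1-2^{-(i-1)}}}{(1-\alpha\gamma^{2^i-2})^{2^{-i}}},
\]
which is the first formula for $\delta(p)$.

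For the final strict upper bound, I would handle numerator and denominator separately. The numerator exponents sum by geometric series:
\[
\sum_{i=1}^{r}\bigl(1 - 2^{-(i-1)}\bigr) = r - \sum_{i=0}^{r-1} 2^{-i} = r - 2 + 2^{-(r-1)},
\]
yielding the claimed $\gamma^{r-2+2^{-(r-1)}}$ on top. For the denominator, since the sequence $\alpha\gamma^{2^i-2}$ is nondecreasing in $i$, each factor is bounded by $(1 - \alpha\gamma^{2^r-2})^{-2^{-i}}$, so
\[
\prod_{i=1}^{r}(1-\alpha\gamma^{2^i-2})^{-2^{-i}} \leq (1-\alpha\gamma^{2^r-2})^{-\sum_{i=1}^{r} 2^{-i}} = (1-\alpha\gamma^{2^r-2})^{-(1-2^{-r})}.
\]
Since $0 < 1 - \alpha\gamma^{2^r-2} < 1$ and $1 - 2^{-r} < 1$, the right side is strictly less than $(1-\alpha\gamma^{2^r-2})^{-1}$, producing the strict inequality in the statement.

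I do not expect any real obstacle: the two preparatory lemmas do all the work, and the only care needed is the bookkeeping of exponents in the geometric-series estimates and a clean verification that the smallness condition $\alpha\gamma^{p-2}<1$ is strong enough to legitimize each doubling step along the way.
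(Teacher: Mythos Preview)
Your proposal is correct and follows essentially the same approach as the paper: base case $r=1$ from (\ref{L2It2}), induction via the doubling step of Lemma \ref{lemL2pLpIt}, noting that $\alpha\gamma^{p-2}<1$ implies all intermediate smallness conditions since $\gamma>1$, and then the geometric-series bookkeeping for the final strict inequality. The paper additionally mentions that the case $r=0$ follows from (\ref{L1it}), which is consistent with the empty product $\delta(1)=1$.
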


\begin{proof}
The case $r=0$ follows from (\ref{L1it}). In the case $r=1$, the inequality coincides with (\ref{L2It2}). The inequalities for $r > 1$ follow because Lemma \ref{lemL2pLpIt} implies that we can choose
\[
\delta(2^r)=\delta(2) \prod_{i=2}^r \frac{\gamma^{1-2^{-(i-1)}}}{(1-\alpha \gamma^{2^i-2})^{2^{-i}}}.
\]
We can apply Lemma \ref{lemL2pLpIt} iteratively, since $\alpha \gamma^{p-2}<1$ implies $\alpha \gamma^{q-2}<1$ for all $q\leq p$.  For the upper bound on $\delta(p)$, we apply the geometric series equality in the nominator, bound the term in brackets under the exponent in the denominator by $1-\alpha \gamma^{p-2}$ and apply the geometric series inequality to the product.
\end{proof}

Since the constants $\delta(2^r)$ are monotonically increasing in $r$, we can immediately extend the bounds of Theorem  \ref{propLpIt} to general $p\geq 1$ using the Riesz-Thorin interpolation theorem (see Davies \cite{DA90}, §1.1.5):\bigskip

\begin{cor}\label{corLp}
Consider $p\in [2^r,2^{r+1}]$ for $r\in \mathbb{N}$ and assume $\alpha \gamma^{2^{r+1}-2}<1$. Then for $1\leq j< k \leq n$ and $f\in B(E)$ we have
\[
\|\hat{q}_{j,k}(f)\|_{L_{p} (\mu_{j})}\leq \delta(p)\|f\|_{L_{p} (\mu_{k})} 
\]
with $\delta(p)$ given by
\[
\delta(p)=\delta(2^{r+1}),
\]
where $\delta(2^{r+1})$ is defined as in Theorem $\ref{propLpIt}$.
\end{cor}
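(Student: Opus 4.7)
The plan is to apply the Riesz--Thorin interpolation theorem directly, using $p = 2^r$ and $p = 2^{r+1}$ as the two endpoints. Theorem \ref{propLpIt} gives us a bound for $\hat{q}_{j,k}: L_{2^r}(\mu_k) \to L_{2^r}(\mu_j)$ with constant $\delta(2^r)$, and the assumption $\alpha \gamma^{2^{r+1}-2}<1$ ensures that Theorem \ref{propLpIt} also applies at the higher endpoint $p = 2^{r+1}$, yielding a bound with constant $\delta(2^{r+1})$. Since $\hat{q}_{j,k}$ is a linear operator on bounded measurable functions (composition of linear operations $f \mapsto \overline{g}_{i,i+1} f$ and $f \mapsto K_i(f)$), Riesz--Thorin applies.

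More concretely, for $p \in [2^r, 2^{r+1}]$, write $\tfrac{1}{p} = \tfrac{1-\theta}{2^r} + \tfrac{\theta}{2^{r+1}}$ for a suitable $\theta \in [0,1]$, take domain measure $\mu_k$ and codomain measure $\mu_j$ with the same exponent at both endpoints, and invoke Riesz--Thorin (as cited, Davies \cite{DA90}, \S1.1.5). This gives
\[
\|\hat{q}_{j,k}(f)\|_{L_p(\mu_j)} \leq \delta(2^r)^{1-\theta}\, \delta(2^{r+1})^{\theta}\, \|f\|_{L_p(\mu_k)}.
\]
Monotonicity of $r \mapsto \delta(2^r)$ then lets us bound the constant by $\delta(2^{r+1})$.

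The monotonicity is the one thing that needs checking, but it is essentially free from the product formula in Theorem \ref{propLpIt}: each factor in
\[
\delta(2^r) = \prod_{i=1}^{r} \frac{\gamma^{1 - 2^{-(i-1)}}}{(1 - \alpha \gamma^{2^i - 2})^{2^{-i}}}
\]
is at least $1$, since $\gamma \geq 1$ by Assumption \ref{ass1} and $\alpha \gamma^{2^i - 2} < 1$ by our hypothesis for all $i \leq r+1$. Hence passing from $\delta(2^r)$ to $\delta(2^{r+1})$ multiplies by a factor $\geq 1$, giving $\delta(2^r) \leq \delta(2^{r+1})$, so $\delta(2^r)^{1-\theta}\delta(2^{r+1})^\theta \leq \delta(2^{r+1})$ and the corollary follows.

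The main subtlety --- not really an obstacle --- is just making sure Riesz--Thorin is being applied with the correct pair of measures. Both endpoint inequalities use the same domain measure $\mu_k$ and the same codomain measure $\mu_j$; the interpolation theorem produces an inequality with the same two measures at intermediate exponents, exactly as needed.
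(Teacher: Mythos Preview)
Your proof is correct and follows exactly the approach the paper indicates in the sentence preceding the corollary: Riesz--Thorin interpolation between the endpoints $2^r$ and $2^{r+1}$, combined with the monotonicity of $r\mapsto \delta(2^r)$, which you verify explicitly from the product formula. The paper gives no further detail beyond that one-sentence justification, so your write-up is simply a fuller version of the same argument.
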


Since we need $L_{2p}$-$L_p$-bounds in the error bounds of Section \ref{secQuantCBG} we now show that given that we have an $L_p$-$L_q$-bound for $K_k$, we can immediately conclude from Corollary \ref{corLp} an $L_p$-$L_q$-bound for $\hat{q}_{j,k}$:\bigskip

\begin{cor}\label{corHypqhat}
Consider $p\geq 1$ and $q\geq 1$. Let $q\in [2^r,2^{r+1}]$ for $r\in \mathbb{N}$ and assume $\alpha \gamma^{2^{r+1}-2}<1$. Assume that for $1\leq j< n$ we have a constant $\theta_j(p,q)\geq 0$ such that
\begin{equation}\label{hyperc}
\|K_{j}(f)\|_{L_{p} (\mu_{j})}\leq \theta_j(p,q)\|f\|_{L_{q} (\mu_{j})}. 
\end{equation}
Then for $j< k \leq n$ and $f\in B(E)$ we have
\[
\|\hat{q}_{j,k}(f)\|_{L_{p} (\mu_{j})}\leq \theta_j(p,q)\gamma^{\frac{q-1}{q}}\delta(q)\|f\|_{L_{q} (\mu_{k})} 
\]
with $\delta(q)$ as defined in Corollary \ref{corLp}.
\end{cor}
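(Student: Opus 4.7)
The plan is to exploit the one-step decomposition $\hat{q}_{j,k}(f) = K_j\bigl(\overline{g}_{j,j+1}\,\hat{q}_{j+1,k}(f)\bigr)$, which follows directly from the definition of $\hat{q}_{j,j+1}$ and the semigroup property. This splits the bound into two parts: first an application of the hyperboundedness hypothesis on $K_j$, and second a recourse to Corollary \ref{corLp} on the shortened propagator $\hat{q}_{j+1,k}$. The glue between the two parts is a change of measure from $\mu_j$ to $\mu_{j+1}$ via the relative density $\overline{g}_{j,j+1}$.

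First I would apply \eqref{hyperc} to the function $\overline{g}_{j,j+1}\hat{q}_{j+1,k}(f)$, obtaining
\[
\|\hat{q}_{j,k}(f)\|_{L_p(\mu_j)} \;\leq\; \theta_j(p,q)\,\|\overline{g}_{j,j+1}\,\hat{q}_{j+1,k}(f)\|_{L_q(\mu_j)}.
\]
The key step is to rewrite the remaining $L_q$-norm as an integral against $\mu_j$, use $\overline{g}_{j,j+1}\leq \gamma$ from Assumption \ref{ass1} to peel off all but one factor of the density, and then recognize the remaining expression as an integral against $\mu_{j+1}$. Concretely,
\[
\|\overline{g}_{j,j+1}\,\hat{q}_{j+1,k}(f)\|_{L_q(\mu_j)}^q
= \mu_j\bigl(\overline{g}_{j,j+1}^q\,|\hat{q}_{j+1,k}(f)|^q\bigr)
\leq \gamma^{q-1}\,\mu_{j+1}\bigl(|\hat{q}_{j+1,k}(f)|^q\bigr),
\]
so taking $q$-th roots contributes exactly the factor $\gamma^{(q-1)/q}$ that appears in the statement.

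Finally, I would invoke Corollary \ref{corLp} on the propagator $\hat{q}_{j+1,k}$ to bound $\|\hat{q}_{j+1,k}(f)\|_{L_q(\mu_{j+1})} \leq \delta(q)\|f\|_{L_q(\mu_k)}$; the hypotheses carry over since $q\in[2^r,2^{r+1}]$ and $\alpha\gamma^{2^{r+1}-2}<1$ are exactly the assumptions of that corollary. The boundary case $j+1=k$ is trivial because $\hat{q}_{k,k}=\mathrm{id}$ and the product formula for $\delta(q)$ shows $\delta(q)\geq 1$ (the first factor alone is $1/\sqrt{1-\alpha}\geq 1$). Multiplying the three bounds gives the claim.

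The main obstacle, if any, is the change-of-measure step: one needs to be disciplined enough to write the $L_q$-norm in integral form before splitting off the single copy of $\overline{g}_{j,j+1}$ that mediates the change from $\mu_j$ to $\mu_{j+1}$. Everything else is mechanical combination of Corollary \ref{corLp} with the given hyperboundedness of $K_j$.
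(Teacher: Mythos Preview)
Your proof is correct and follows exactly the paper's approach: apply the hyperboundedness of $K_j$ to the decomposition $\hat{q}_{j,k}(f)=K_j(\overline{g}_{j,j+1}\hat{q}_{j+1,k}(f))$, change measure from $\mu_j$ to $\mu_{j+1}$ at the cost of $\gamma^{(q-1)/q}$, and invoke Corollary~\ref{corLp}. Your write-up is in fact more careful than the paper's, which skips the change-of-measure computation and the boundary case $j+1=k$.
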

\begin{proof}
By (\ref{hyperc}) we have
\[
\|\hat{q}_{j,k}(f)\|_{L_{p} (\mu_{j})}\leq \theta_j(p,q)\|\overline{g}_{j,j+1} \hat{q}_{j+1,k}(f)\|_{L_{q} (\mu_{j})} 
\]
and thus by Corollary $\ref{corLp}$
\[
\|\hat{q}_{j,k}(f)\|_{L_{p} (\mu_{j})}\leq \theta_j(p,q)\gamma^{\frac{q-1}{q}}\delta(q)\|f\|_{L_{q} (\mu_{k})}. 
\]
\end{proof}
\bigskip

The following lemma shows, that $L_p$-$L_q$-bounds for the $\hat{q}_{j,k}$ can be used to obtain $L_p$-$L_q$-bounds for the original mappings $q_{j,k}$.\bigskip

\begin{lem}\label{lemqhatq}
Assume that for some $p\geq 1$ and $q\geq 1$ we have a $\delta\geq 0$ such that for all $f\in B(E)$ and for all $1\leq j< k\leq n$
\[
\| \hat{q}_{j,k}(f) \|_{L_p(\mu_j)} \leq \delta\, \| f \|_{L_q(\mu_k)}.
\]
Then we have
\[
\| q_{j,k}(f) \|_{L_p(\mu_j)} \leq \delta\, \gamma^{\frac{p-1}{p}} \| f \|_{L_q(\mu_k)}.
\]
\end{lem}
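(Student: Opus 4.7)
The plan is to leverage the identity
\[
q_{j,k}(f) \;=\; \overline{g}_{j,j+1}\,\hat{q}_{j+1,k}(K_k(f))
\]
already recorded in the paper, together with the uniform density bound $\overline{g}_{j,j+1} \leq \gamma$ from Assumption \ref{ass1} and the $\mu_k$-stationarity of $K_k$ set up in Section \ref{Lpsetting}.

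First, I would expand the $L_p(\mu_j)$-norm and peel off one factor of the relative density in order to change the reference measure from $\mu_j$ to $\mu_{j+1}$:
\[
\|q_{j,k}(f)\|_{L_p(\mu_j)}^p = \mu_j\!\left(\overline{g}_{j,j+1}^{\,p-1}\,\overline{g}_{j,j+1}\,|\hat{q}_{j+1,k}(K_k(f))|^p\right) \leq \gamma^{\,p-1}\,\mu_{j+1}(|\hat{q}_{j+1,k}(K_k(f))|^p),
\]
where the last step uses the identity $\mu_{j+1}(h) = \mu_j(\overline{g}_{j,j+1}\,h)$, which holds because the stationarity assumption of Section \ref{Lpsetting} forces $\hat{\mu}_{j+1} = \mu_{j+1}$.

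Second, I would apply the hypothesis of the lemma to the propagator $\hat{q}_{j+1,k}$ with argument $K_k(f)$, obtaining $\|\hat{q}_{j+1,k}(K_k(f))\|_{L_p(\mu_{j+1})} \leq \delta\,\|K_k(f)\|_{L_q(\mu_k)}$. Third, since $K_k$ is a probability kernel, Jensen's inequality gives $|K_k(f)|^q \leq K_k(|f|^q)$, and $\mu_k K_k = \mu_k$ then yields the standard $L_q$-contraction $\|K_k(f)\|_{L_q(\mu_k)} \leq \|f\|_{L_q(\mu_k)}$. Chaining the three bounds and extracting the $p$-th root produces $\|q_{j,k}(f)\|_{L_p(\mu_j)} \leq \gamma^{(p-1)/p}\,\delta\,\|f\|_{L_q(\mu_k)}$, which is the claim.

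The argument is essentially bookkeeping of relative densities plus Jensen's inequality, so no single step is a serious obstacle. The only delicate point is the boundary case $k=j+1$, in which $\hat{q}_{j+1,k}=\hat{q}_{k,k}$ is the identity and the hypothesis does not literally apply. In that case the same change-of-measure reduces matters to verifying $\|K_{j+1}(f)\|_{L_p(\mu_{j+1})} \leq \delta\,\|f\|_{L_q(\mu_{j+1})}$; this holds with $\delta \geq 1$ (forced by applying the hypothesis to $f\equiv 1$, using $\mu_j(\hat{q}_{j,k}(1))=1$ and Jensen) as long as $p \leq q$, the regime in which the hypothesis is non-vacuous.
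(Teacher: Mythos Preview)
Your main argument is correct and identical to the paper's proof: decompose $q_{j,k}(f)=\overline{g}_{j,j+1}\,\hat{q}_{j+1,k}(K_k(f))$, extract $\gamma^{(p-1)/p}$ while changing the reference measure from $\mu_j$ to $\mu_{j+1}$, apply the hypothesis to $\hat{q}_{j+1,k}$, and finish with Jensen's inequality plus the $\mu_k$-stationarity of $K_k$.

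One correction to your boundary-case remark: the hypothesis is \emph{not} vacuous for $p>q$ --- the lemma is in fact invoked in Corollary~\ref{corNEULP} precisely with $p>q$, the bound on $\hat q_{j,k}$ being supplied by hypercontractivity of the kernels $K_l$. The paper's proof shares the same formal gap at $k=j+1$ (it silently uses the hypothesis for $\hat q_{k,k}$), and in the actual applications that case is covered directly by the assumed hypercontractivity of $K_k$, not by an $L_p$--$L_q$ norm embedding.
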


\begin{proof}
Note that we have
\begin{eqnarray}
\| q_{j,k}(f) \|_{L_p(\mu_j)} &=&  \mu_j\left( |\overline{g}_{j,j+1} \hat{q}_{j+1,k}(K_{k}(f))|^p\right)^{\frac1p}\nonumber\\
&\leq& \gamma^{\frac{p-1}{p}} \mu_{j+1}\left( |\hat{q}_{j+1,k}(K_{k}(f))|^p\right)^{\frac1p}\nonumber\\
&\leq& \gamma^{\frac{p-1}{p}} \delta\, \| K_k(f) \|_{L_q(\mu_k)}\nonumber\\
&\leq& \gamma^{\frac{p-1}{p}} \delta\, \| f \|_{L_q(\mu_k)},\nonumber
\end{eqnarray}
where in the last step we used that by Jensen's inequality $|K_k(f)|^q\leq K_k(|f|^q)$ and that $\mu_k$ is a stationary distribution for $K_k$.
\end{proof}

Combining Theorem \ref{propLpIt} with Corollary \ref{corHypqhat} and Lemma \ref{lemqhatq} we immediately obtain the type of bound needed in the error bounds of Section \ref{secQuantCBG}.\bigskip

\begin{cor}\label{corNEULP}
Consider $p\geq 1$ and $q\geq 1$. Let $q\in [2^r,2^{r+1}]$ for $r\in \mathbb{N}$ and assume $\alpha \gamma^{2^{r+1}-2}<1$. Assume that for all $1\leq j \leq n$ we have a constant $\theta(p,q)\geq 0$ such that
\begin{equation}%\label{hyperc}
\|K_{j}(f)\|_{L_{p} (\mu_{j})}\leq \theta(p,q)\|f\|_{L_{q} (\mu_{j})}. \nonumber
\end{equation}
Then for all $1 \leq j< k \leq n$ and $f\in B(E)$ we have
\[
\|q_{j,k}(f)\|_{L_{p} (\mu_{j})}\leq \widetilde{c}_{j,k}(p,q) \|f\|_{L_{q} (\mu_{k})} 
\]
with
\[
\widetilde{c}_{j,k}(p,q)=\theta(p,q) \gamma^{\frac{p-1}{p}} \gamma^{\frac{q-1}{q}}\delta(q),
\]
where $\delta(q)$ as defined in Corollary \ref{corLp}.
\end{cor}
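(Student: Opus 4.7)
The plan is to chain together the three preceding ingredients of this section. The pivotal observation is the factorization
\[
q_{j,k}(f) = \overline{g}_{j,j+1}\, \hat{q}_{j+1,k}(K_k(f))
\]
recorded in Section \ref{secStabFK}, which reduces the desired $L_p$-$L_q$ estimate for $q_{j,k}$ to an $L_p$-$L_q$ estimate for the auxiliary propagator $\hat{q}_{j+1,k}$, at the price of manipulating the weight $\overline{g}_{j,j+1}$ and pushing the kernel $K_k$ through.

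First, I would invoke Corollary \ref{corHypqhat}. The assumption $\alpha \gamma^{2^{r+1}-2} < 1$ with $q \in [2^r, 2^{r+1}]$ is precisely what activates Corollary \ref{corLp} and thereby supplies the constant $\delta(q)$ controlling the $L_q$-stability of $\hat{q}_{\cdot,k}$. Combined with the uniform hyperboundedness hypothesis $\|K_j(f)\|_{L_p(\mu_j)} \leq \theta(p,q)\|f\|_{L_q(\mu_j)}$, Corollary \ref{corHypqhat} yields
\[
\|\hat{q}_{j,k}(f)\|_{L_p(\mu_j)} \leq \theta(p,q)\, \gamma^{(q-1)/q}\, \delta(q)\, \|f\|_{L_q(\mu_k)},
\]
valid for all $1 \leq j < k \leq n$ with the same constant.

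Second, I would apply Lemma \ref{lemqhatq} with $\delta := \theta(p,q)\, \gamma^{(q-1)/q}\, \delta(q)$ to transfer this $L_p$-$L_q$ bound from $\hat{q}_{j,k}$ to the original propagator $q_{j,k}$. The lemma contributes the additional factor $\gamma^{(p-1)/p}$, which arises from the $L_p$-norm of $\overline{g}_{j,j+1}$ after a change of measure from $\mu_j$ to $\mu_{j+1}$, together with Jensen's inequality and the stationarity $\mu_k K_k = \mu_k$ used to absorb the $K_k(f)$ factor. The resulting constant is exactly the prescribed
\[
\widetilde{c}_{j,k}(p,q) = \theta(p,q)\, \gamma^{(p-1)/p}\, \gamma^{(q-1)/q}\, \delta(q).
\]

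There is essentially no obstacle beyond careful bookkeeping of the powers of $\gamma$; no new analytic idea is needed beyond the contents of Corollary \ref{corHypqhat} and Lemma \ref{lemqhatq}. The only point requiring mild care is checking that the uniformity of the hyperboundedness constant $\theta(p,q)$ in the index $j$ and the uniform condition on the mixing parameters $\alpha$, $\gamma$ propagate through both steps without introducing any $j$- or $k$-dependent factors beyond those already collected in $\widetilde{c}_{j,k}(p,q)$.
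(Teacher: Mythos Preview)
Your proposal is correct and matches the paper's own argument: the corollary is stated to follow immediately by combining Corollary \ref{corHypqhat} with Lemma \ref{lemqhatq}, which is precisely the two-step chaining you describe. No additional ideas or bookkeeping are needed beyond what you have outlined.
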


To round out the analysis of this section, we show that for functions $f$ with $\mu_k(f)=0$ we can moreover show the following result of exponential decay of $\|q_{j,k}(f)\|_{L_{p} (\mu_{j})}$:\bigskip

\begin{prop}\label{expdecay}
Let $p\geq 2$ with $p=2^r$ for $r\in \mathbb{N}$. Assume that $\theta_p=\alpha \gamma^{2p-2}<1$. Then for $1\leq j<k\leq n$ and $f\in B(E)$ with $\mu_k(f)=0$ we have
\[
\|\hat{q}_{j,k}(f)\|^p_{L_{p} (\mu_{j})}\leq \lambda_p\, \theta_p^{k-j} \|f\|^p_{L_{p} (\mu_{k})}, 
\]
where the constants $\lambda_p$ can be calculated recursively from $\lambda_2=1$ and
\[
\lambda_{2p}=1+\lambda_p^2 \, \left(\alpha \left(1-\frac{\alpha}{\gamma^2}\right)\right)^{-1}.
\]
Moreover,
\[
\lambda_p \leq \left[2\,\left(\alpha \left(1-\frac{\alpha}{\gamma^2}\right)\right)^{-1}\right]^{\frac{p}{2}-1}
\]
\end{prop}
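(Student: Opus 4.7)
The plan is to induct on $r$, where $p = 2^r$, running the argument parallel to the iteration in the proof of Theorem \ref{propLpIt} but using the centering assumption $\mu_k(f)=0$ to kill the $\beta$-term in Assumption \ref{ass2}. The base case $p=2$ is clean: since $\mu_j(\hat q_{j,k}(f)) = \mu_k(f) = 0$ for every intermediate $j$, iterating (\ref{alphabetabd}) contributes only the $\alpha$-term and yields $\|\hat q_{j,k}(f)\|^2_{L_2(\mu_j)} \leq \alpha^{k-j} \|f\|^2_{L_2(\mu_k)} \leq \theta_2^{k-j} \|f\|^2_{L_2(\mu_k)}$ (using $\theta_2 = \alpha\gamma^2 \geq \alpha$), so $\lambda_2 = 1$.

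For the inductive step, I would feed Lemma \ref{lemL2pS1} into the same telescoping procedure that produced (\ref{L2pIneqIt}), obtaining
\[
\|\hat{q}_{j,k}(f)\|^{2p}_{L_{2p}(\mu_j)} \leq \theta_{2p}^{k-j} \|f\|^{2p}_{L_{2p}(\mu_k)} + \beta \gamma^{2p-2} \sum_{i=j+1}^{k} \theta_{2p}^{i-1-j} \|\hat{q}_{i,k}(f)\|^{2p}_{L_p(\mu_i)}.
\]
Because $\mu_k(f)=0$, the inductive hypothesis applies to each $\hat q_{i,k}(f)$, giving $\|\hat q_{i,k}(f)\|^{2p}_{L_p(\mu_i)} \leq \lambda_p^2\, \theta_p^{2(k-i)} \|f\|^{2p}_{L_p(\mu_k)} \leq \lambda_p^2\, \theta_p^{2(k-i)} \|f\|^{2p}_{L_{2p}(\mu_k)}$ by Jensen. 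The crucial algebraic identity is $\theta_p^2 = \theta_{2p}\cdot(\alpha/\gamma^2)$, so after substitution the residual sum becomes geometric in $\alpha/\gamma^2 < 1$, collapsing to $\theta_{2p}^{k-j-1}(1-\alpha/\gamma^2)^{-1}$. Pulling out the common factor $\theta_{2p}^{k-j}$ and noting that $\beta\gamma^{2p-2}/\theta_{2p} = \beta/(\alpha\gamma^{2p}) \leq 1/\alpha$ (using $\beta\leq 1$, $\gamma\geq 1$), I obtain exactly the stated recursion $\lambda_{2p} \leq 1 + \lambda_p^2/(\alpha(1-\alpha/\gamma^2))$.

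The closed-form upper bound is then a short secondary induction: setting $C = 2/(\alpha(1-\alpha/\gamma^2))$, one checks $C \geq 2$ because $\alpha(1-\alpha/\gamma^2) < 1$, and then $\lambda_p \leq C^{p/2-1}$ implies $\lambda_{2p} \leq 1 + (C/2)\lambda_p^2 \leq 1 + C^{p-1}/2 \leq C^{p-1}$, completing the chain. The main obstacle will be the constant bookkeeping, specifically recognizing that the natural ratio $\theta_p^2/\theta_{2p} = \alpha/\gamma^2$ is strictly less than one so that the geometric sum converges uniformly in $k-j$; once this is in place, everything collapses into a clean one-line recursion on $\lambda_p$.
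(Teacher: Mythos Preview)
Your proof is correct and follows the same induction on $r$ as the paper: base case from (\ref{L2It1}) with $\mu_k(f)=0$, inductive step via the iterated one-step bound (\ref{L2pIneqIt}) combined with the inductive hypothesis, the geometric series in $\theta_p^2/\theta_{2p}=\alpha/\gamma^2$, and the same short secondary induction for the closed-form bound on $\lambda_p$. One minor point worth noting: the coefficient that literally comes out of iterating Lemma \ref{lemL2pS1} is $\alpha\gamma^{2p-2}$, which in the statement's convention equals $\theta_p$ rather than $\theta_{2p}$, so your displayed telescoping inequality is already a harmless weakening via $\theta_p\le\theta_{2p}$ (the paper's own proof in effect uses the convention $\theta_q=\alpha\gamma^{q-2}$, under which that line holds with equality and $\gamma^{2p-2}/\theta_{2p}=1/\alpha$ directly).
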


\begin{proof}
From (\ref{L2It1}) and $\mu_k(f)=0$ we immediately get the result for $p=2$. Now we proceed inductively concluding from a bound for $p$ a bound for $2p$. Assume thus $\theta_{2p}<1$ and that we have 
\begin{equation}\label{IVexpdec}
\|\hat{q}_{j,k}(f)\|^p_{L_{p} (\mu_{j})}\leq \lambda_p \theta_p^{k-j} \|f\|^p_{L_{p} (\mu_{k})} 
\end{equation}
for $\theta_p $ as defined above and for some $\lambda_p \geq 1$. Observe that $\theta_{2p}<1$ implies immediately $\theta_p<1$. From (\ref{L2pIneqIt}) and (\ref{IVexpdec}) and since we assumed $\lambda_p\geq 1$ we have the inequality
\[
\|\hat{q}_{j,k}(f)\|_{L_{2p} (\mu_{j})}^{2p}\leq \theta_{2p}^{k-j} \|f\|_{L_{2p} (\mu_{k})}^{2p}+\gamma^{2p-2}\sum_{i=j+1}^{k}\theta_{2p}^{i-1-j} \lambda_p^2 \theta_p^{2 (k-i)} \|f\|_{L_{p} (\mu_{k})}^{2p}.
\]
Thus we have
\[
\|\hat{q}_{j,k}(f)\|_{L_{2p} (\mu_{j})}^{2p}\leq \widetilde{\lambda}_{2p}\, \theta_{2p}^{k-j} \|f\|_{L_{2p}(\mu_{k})}^{2p}
\]
with
\[
\widetilde{\lambda}_{2p}=1+\lambda_p^2 \gamma^{2p-2}\theta_{2p}^{-1}\sum_{i=j+1}^{k} \left(\frac{\theta_p^2}{\theta_{2p}} \right)^{k-i}=1+\lambda_p^2 \gamma^{2p-2}\theta_{2p}^{-1}\sum_{i=0}^{k-j-1} \left(\frac{\theta_p^2}{\theta_{2p}} \right)^i.
\]
Observing that 
\[
\gamma^{2p-2} \,\theta_{2p}^{-1} =\frac1\alpha
\]
and
\[
\frac{\theta_p^2}{\theta_{2p}} =\frac{\alpha}{\gamma^2}<1,
\]
and applying the geometric series inequality thus yields
\[
\widetilde{\lambda}_{2p}\leq 1+\lambda_p^2 \, \left(\alpha \left(1-\frac{\alpha}{\gamma^2}\right)\right)^{-1}.
\]
Choosing
\[
\lambda_{2p}=1+\lambda_p^2 \, \left(\alpha \left(1-\frac{\alpha}{\gamma^2}\right)\right)^{-1}
\]
we have thus shown the desired decay inequality. Moreover, observe that, since $\gamma>1$ and $\alpha<1$, $\lambda_p\geq 1$ implies that $\lambda_{2p}\geq 1$ and thus we have $\lambda_p\geq 1$ for all $p=2^r$. To show the upper bound on the coefficients $\lambda_p$, define
\[
\kappa = 2 \left(\alpha \left(1-\frac{\alpha}{\gamma^2}\right)\right)^{-1}.
\]
Since $\kappa > 2$ and $\lambda_p \geq 1$, we have $\lambda_{2p} \leq \kappa \lambda_p^2$. Since $\lambda_2=1$, this implies
\[
\lambda_{2p} \leq \kappa^{p-1}.
\]
\end{proof}

By the Riesz-Thorin interpolation theorem, Proposition \ref{expdecay} immediately generalizes to the case $p \neq 2^r$. Corollary \ref{corHypqhat} and Lemma \ref{lemqhatq} can be used to extend Proposition \ref{expdecay} to $L_p$-$L_q$-bounds and to the $q_{j,k}$.

\section{Error Bounds from Poincaré and Logarithmic Sobolev Inequalities}\label{GLOPOLOSO}
In Section \ref{secStabFKP} we show that assuming (global) Poincaré inequalities associated with sufficiently large spectral gaps for the kernels $K_k$ is sufficient for guaranteeing that the results on $L_p$-Stability of the Feynman-Kac propagator $q_{j,k}$ from Section \ref{secStabFK} can be applied. In Section \ref{secExpErrB} we then give a more explicit version of our error bound for Sequential MCMC in terms of the constants in Poincaré and Logarithmic Sobolev Inequalities. We continue to work in the framework introduced in Section \ref{Lpsetting}.%\bigskip 

\subsection{Poincaré Inequalities and Stability of Feynman-Kac Propagators}\label{secStabFKP}

Our $L_p$-stability results of Section \ref{secStabFK} relied on Assumption \ref{ass2}, namely,
\[
\| \hat{q}_{k-1,k}(f) \|_{L_2(\mu_{k-1})}^2\leq \alpha \| f \|_{L_2(\mu_{k})}^2 + \beta \mu_k(f)^2
\]
for coefficients $\alpha>0$ and $\beta \in [0,1]$, where
\[
\hat{q}_{k-1,k}(f)=K_{k-1}\left(\overline{g}_{k-1,k} f \right) \;\;\;\; \forall f \in B(E),
\]
and on the Assumption \ref{ass1} of an upper bound $\gamma$ on the normalized relative densities $\overline{g}_{k-1,k}$. Additionally, we needed conditions assuming that $\alpha$ is sufficiently small.\bigskip

In the following we relate Assumption \ref{ass2} to Poincaré inequalities for the transition kernels $K_k$. We first show that Assumption \ref{ass2} holds, provided that the following $L_2$-inequalities for the kernels $K_k$ are satisfied: 
\begin{ass}\label{assPoinc}
For $1\leq k \leq n$ and $\rho \in (0,1)$ assume that for all $f\in B(E)$, 
\begin{equation}\label{rhobd}
\text{Var}_{\mu_k}(K_k(f)) \leq (1-\rho) \text{Var}_{\mu_k}(f).
\end{equation}
\end{ass}

\begin{lem}\label{lemL2S1}
Assume that Assumption \ref{assPoinc} is satisfied for some $\rho \in (0,1)$ and Assumption \ref{ass1} is satisfied for some $\gamma$. Then Assumption \ref{ass2} holds with
\[
\alpha=(1-\rho) \gamma \text{  and   }\beta = \rho.
\]
\end{lem}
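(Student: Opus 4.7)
The plan is to expand $\|\hat{q}_{k-1,k}(f)\|_{L_2(\mu_{k-1})}^2$ as a variance term plus a squared-mean term, control the variance via the Poincar\'e-type contraction of Assumption \ref{assPoinc} applied to $K_{k-1}$, and then use the pointwise upper bound $\overline{g}_{k-1,k}\le\gamma$ from Assumption \ref{ass1} to pass from integrals against $\mu_{k-1}$ to integrals against $\mu_k$.

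First I set $h=\overline{g}_{k-1,k}\,f$, so that $\hat{q}_{k-1,k}(f)=K_{k-1}(h)$. Writing $\mu_{k-1}(K_{k-1}(h)^2)$ as a variance plus a squared mean and using stationarity of $K_{k-1}$ under $\mu_{k-1}$ to rewrite $\mu_{k-1}(K_{k-1}(h))=\mu_{k-1}(h)$, I obtain
\[
\|\hat{q}_{k-1,k}(f)\|_{L_2(\mu_{k-1})}^2 \;=\; \text{Var}_{\mu_{k-1}}(K_{k-1}(h)) + \mu_{k-1}(h)^2.
\]
The change-of-measure identity $\mu_{k-1}(\overline{g}_{k-1,k}\,f)=\mu_k(f)$ identifies the second summand as $\mu_k(f)^2$.

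Next I apply Assumption \ref{assPoinc} to $K_{k-1}$, which gives $\text{Var}_{\mu_{k-1}}(K_{k-1}(h))\le(1-\rho)\,\text{Var}_{\mu_{k-1}}(h)=(1-\rho)\bigl(\mu_{k-1}(h^2)-\mu_k(f)^2\bigr)$. The only nontrivial step is relating $\mu_{k-1}(h^2)=\mu_{k-1}(\overline{g}_{k-1,k}^{\,2}f^2)$ to $\mu_k(f^2)$: factoring out one copy of $\overline{g}_{k-1,k}$, bounding it by $\gamma$ via Assumption \ref{ass1}, and then absorbing the remaining $\overline{g}_{k-1,k}$ into the change of measure yields
\[
\mu_{k-1}(\overline{g}_{k-1,k}^{\,2} f^2) \;\le\; \gamma\,\mu_{k-1}(\overline{g}_{k-1,k} f^2) \;=\; \gamma\,\mu_k(f^2) \;=\; \gamma\,\|f\|_{L_2(\mu_k)}^2.
\]

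Substituting back, the coefficient of $\mu_k(f)^2$ becomes $1-(1-\rho)=\rho$ and the coefficient of $\|f\|_{L_2(\mu_k)}^2$ becomes $(1-\rho)\gamma$, which matches the claim. There is no real obstacle here; the whole argument is essentially bookkeeping, but two observations are crucial: exactly one copy of $\overline{g}_{k-1,k}$ must be absorbed into the change of measure $\mu_{k-1}\mapsto\mu_k$ (the other one contributes the factor $\gamma$), and the Poincar\'e inequality is the right tool for the variance of $K_{k-1}(h)$ because $\mu_{k-1}$ is the stationary distribution for $K_{k-1}$, which is what makes the mean $\mu_{k-1}(K_{k-1}(h))$ collapse to $\mu_k(f)$ instead of something less tractable.
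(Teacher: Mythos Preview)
Your proof is correct and follows essentially the same route as the paper: decompose $\mu_{k-1}(K_{k-1}(h)^2)$ into variance plus squared mean with $h=\overline{g}_{k-1,k}f$, apply Assumption~\ref{assPoinc} to the variance, and bound $\mu_{k-1}(\overline{g}_{k-1,k}^{\,2}f^2)\le\gamma\,\mu_k(f^2)$ by peeling off one factor of $\overline{g}_{k-1,k}$. The only cosmetic difference is that you invoke stationarity of $K_{k-1}$ explicitly to identify the mean, whereas the paper writes the same decomposition directly as $\mu_{k-1}(K_{k-1}(h-\mu_{k-1}(h))^2)+\mu_{k-1}(h)^2$.
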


\begin{proof}
Note that we can write
\begin{eqnarray}
\| \hat{q}_{k-1,k}(f) \|_{L_2(\mu_{k-1})}^2&=&\mu_{k-1}(K_{k-1}(\overline{g}_{k-1,k} f)^2)
\nonumber\\
&=&\mu_{k-1}(K_{k-1}(\overline{g}_{k-1,k}f - \mu_{k-1}( \overline{g}_{k-1,k}f  ))^2)+\mu_{k-1}( \overline{g}_{k-1,k}f  )^2.\nonumber
\end{eqnarray}
Thus by (\ref{rhobd}) we have
\begin{eqnarray}
\| \hat{q}_{k-1,k}(f) \|_{L_2(\mu_{k-1})}^2 &\leq& (1-\rho) \left(\mu_{k-1}((\overline{g}_{k-1,k}f)^2)-\mu_{k-1}( \overline{g}_{k-1,k}f  )^2\right)+\mu_{k-1}( \overline{g}_{k-1,k}f  )^2\nonumber\\
&\leq& (1-\rho)\gamma \mu_k(f^2)+\rho \mu_k(f)^2,\nonumber
\end{eqnarray}
which proves the claim.
\end{proof}

We next show how the constant $\rho$ from (\ref{rhobd}) can be controlled in terms of lower bounds on the spectral gaps of the kernels $K_k$. We add one additional assumption for the remainder of Section \ref{secStabFKP}: Assume that $K_k$ is reversible with respect to $\mu_k$, i.e., for all $f, g \in B(E)$
\[
\mu_k(g K_k(f))=\mu_k(f K_k(g)).
\]
Reversibility is, for instance, fulfilled by construction for Metropolis chains.\bigskip

\begin{lem}\label{PoincarLem}
Assume that for all $1 \leq k \leq n$ we have a $\lambda_k\in(0,1)$ such that $K_k$ fulfills a Poincaré inequality with constant $\lambda_k$,
\begin{equation}\label{Poincar}
\lambda_k\, \mu_k(f^2)\leq \mu_k(f \, (I-K_k) (f)) 
\end{equation}
for all $f\in B(E)$ with $\mu(f)=0$, where $I$ denotes the identity mapping on $E$. Then we have
\[
\mu_k(K_k(f-\mu_k(f))^2) \leq (1-\lambda_k)^2 \text{Var}_{\mu_k}(f)
\]
for all $f \in B(E)$. In particular, Assumption \ref{assPoinc} holds with
\[
\rho=\min_{k} \; (1-\lambda_k)^2.
\]
\end{lem}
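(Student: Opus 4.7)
The lemma is really two things: the $L^2$-contraction estimate
\[
\mu_k\bigl(K_k(f - \mu_k(f))^2\bigr) \leq (1-\lambda_k)^2\,\text{Var}_{\mu_k}(f),
\]
and a bookkeeping consequence that implies Assumption \ref{assPoinc} after taking the worst constant across $k$. Once the contraction estimate is in hand, the in-particular statement is immediate: by stationarity $\mu_k(K_k f) = \mu_k(f)$, so the left-hand side of the displayed inequality is exactly $\text{Var}_{\mu_k}(K_k f)$, and uniformity over $k$ is just a min.

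To prove the contraction I would turn the Poincaré inequality into an operator-norm bound via reversibility. Let $g = f - \mu_k(f)$, so $\mu_k(g)=0$ and, by stationarity, $\mu_k(K_k g) = 0$. Reversibility makes $K_k$ a bounded self-adjoint operator on $L^2(\mu_k)$, hence
\[
\mu_k((K_k g)^2) \;=\; \langle K_k g,\, K_k g\rangle_{L^2(\mu_k)} \;=\; \langle g,\, K_k^2 g\rangle_{L^2(\mu_k)}.
\]
The Poincaré inequality on the mean-zero subspace $L^2_0(\mu_k)$ reads $\langle g,(I-K_k)g\rangle \geq \lambda_k\|g\|^2$, which confines the spectrum of $K_k|_{L^2_0(\mu_k)}$ to $(-\infty, 1-\lambda_k]$. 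Combined with the Markov bound $\|K_k\|_{\mathrm{op}} \leq 1$, functional calculus then delivers $\langle g, K_k^2 g\rangle \leq (1-\lambda_k)^2 \|g\|^2$, which is exactly the desired inequality.

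The subtle point — and the step I expect to be the real obstacle — is that the Poincaré inequality only delivers the one-sided spectral bound $\sigma(K_k|_{L^2_0(\mu_k)}) \subseteq (-\infty, 1-\lambda_k]$, whereas to conclude the $L^2$-operator norm is actually at most $1-\lambda_k$ one needs the two-sided bound $\sigma(K_k|_{L^2_0(\mu_k)}) \subseteq [-(1-\lambda_k),\, 1-\lambda_k]$. If $K_k$ has a spectral value close to $-1$, then $\|K_k g\|^2$ can be as large as $\|g\|^2$ rather than $(1-\lambda_k)^2\|g\|^2$, and the lemma fails. In the paper's setting this gap is bridged by the implicit standing convention that the MCMC kernels $K_k$ are non-negative definite on $L^2(\mu_k)$ — automatic for lazy chains, and typical for many composed Metropolis steps — and once this is granted the functional-calculus step goes through cleanly.
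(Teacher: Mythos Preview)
Your approach is essentially identical to the paper's: rewrite the Poincar\'e inequality as $\mu_k(fK_kf)/\mu_k(f^2)\le 1-\lambda_k$ on $\mathrm{span}(1)^\perp$, pass to $K_k^2$, and use reversibility to turn $\langle g,K_k^2 g\rangle$ into $\mu_k((K_kg)^2)$.

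Your diagnosis of the subtle point is also accurate, and it is worth noting that the paper's own proof makes exactly the leap you flag without further comment: from ``the spectrum of $K_k$ is bounded from above by $1-\lambda_k$ on $\mathrm{span}(1)^\perp$'' it infers ``consequently, the spectrum of $K_k^2$ is bounded from above by $(1-\lambda_k)^2$'', which is only valid once the spectrum is also bounded below by $-(1-\lambda_k)$. The paper does not state an explicit non-negativity assumption on $K_k$; so either one reads it as implicit (as you suggest, natural for lazy or multi-step MCMC kernels), or one accepts that the lemma as written needs this extra hypothesis. Your write-up is therefore not only aligned with the paper's argument but slightly more careful about its scope.
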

\begin{proof}
By (\ref{Poincar}) we have for $f\in B(E)$ with $\mu(f)=0$ and $f \not\equiv 0$,
\[
\frac{\mu_k(f\,K_k(f))}{\mu_k(f^2)}\leq 1-\lambda_k,
\]
and thus the spectrum of $K_k$ is bounded from above by $1-\lambda_k$ on $\text{span}(1)^\bot$. Consequently, the spectrum of $K_k^2$ is bounded from above by $(1-\lambda_k)^2$ on $\text{span}(1)^\bot$, i.e.,
\[
\frac{\mu_k(f\,K_k^2(f))}{\mu_k(f^2)}\leq (1-\lambda_k)^2.
\]
By the reversibility of $K_k$, this is equivalent to
\[
\mu_k(K_k(f)^2)\leq (1-\lambda_k)^2 \mu_k(f^2).
\]
To conclude the proof observe that thus for all $f\in B(E)$
\[
\mu_k(K_k(f-\mu(f))^2)\leq (1-\lambda_k)^2 \mu_k((f-\mu_k(f))^2).
\]
\end{proof}

Thus, assuming (\ref{rhobd}) is equivalent to assuming a Poincaré inequality. In the algorithm, $\rho$ can be controlled by varying the number of MCMC steps: A sufficiently large number of MCMC steps makes $\rho$ large and accordingly it makes $\alpha$ small. For future reference, we also give a version of Theorem \ref{propLpIt} under the stronger assumption that (\ref{rhobd}) holds for some sufficiently large $\rho \in (0,1)$. This follows immediately from the Theorem by inserting the values of $\alpha$ and $\beta$ from Lemma \ref{lemL2S1}.
\bigskip

\begin{cor}\label{corLpIt}
Assume that $(\ref{rhobd})$ holds for some $\rho$ with $(1-\rho) \gamma^{p-1}<1$. Consider  $p=2^r$ for $r\in \mathbb{N}$. Then we have for $1\leq j< k \leq n$ and $f\in B(E)$ the inequality
\[
\|\hat{q}_{j,k}(f)\|_{L_{p} (\mu_{j})}\leq \delta(p)\|f\|_{L_{p} (\mu_{k})} 
\]
with
\[
\delta(p)=\prod_{j=1}^r \frac{\gamma^{1-2^{-(j-1)}}}{(1-(1-\rho)\gamma^{2^j-1})^{2^{-j}}}< \frac{\gamma^{r-2+2^{-(r-1)}}}{1-(1-\rho)\gamma^{2^r-1}}.
\]
\end{cor}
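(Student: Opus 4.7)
The plan is to recognize this as an immediate consequence of Theorem \ref{propLpIt} once Assumption \ref{ass2} has been verified via Lemma \ref{lemL2S1}. There is essentially no new analytic content; the work is entirely algebraic substitution, and the only mild obstacle is keeping the exponents of $\gamma$ straight when converting the threshold condition and the formula for $\delta(p)$.

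First I would invoke Lemma \ref{lemL2S1}: since Assumption \ref{assPoinc} holds with constant $\rho$ and the normalized relative densities are bounded by $\gamma$ (Assumption \ref{ass1}), the $L_2$ stability inequality (\ref{alphabetabd}) from Assumption \ref{ass2} holds with the explicit choice
\[
\alpha = (1-\rho)\,\gamma, \qquad \beta = \rho \in [0,1].
\]
Next I would check the hypothesis of Theorem \ref{propLpIt}. That theorem requires $\alpha\gamma^{p-2}<1$, and with the above value of $\alpha$ this becomes
\[
\alpha\gamma^{p-2} = (1-\rho)\,\gamma \cdot \gamma^{p-2} = (1-\rho)\,\gamma^{p-1} < 1,
\]
which is exactly the standing assumption of the corollary. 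Hence Theorem \ref{propLpIt} applies and yields, for $p = 2^r$, the inequality $\|\hat{q}_{j,k}(f)\|_{L_p(\mu_j)} \le \delta(p)\|f\|_{L_p(\mu_k)}$.

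Finally I would substitute $\alpha = (1-\rho)\gamma$ into the explicit expression for $\delta(p)$ from Theorem \ref{propLpIt}. For the product factor this gives
\[
1 - \alpha\gamma^{2^i-2} = 1 - (1-\rho)\,\gamma^{2^i-1},
\]
so the product formula in the corollary is obtained directly. The same substitution in the upper bound replaces $1-\alpha\gamma^{2^r-2}$ by $1-(1-\rho)\gamma^{2^r-1}$, matching the stated estimate. Since the numerator $\gamma^{1-2^{-(i-1)}}$ does not involve $\alpha$, it carries over unchanged, and the resulting expressions coincide with those asserted in the corollary.

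The whole argument is bookkeeping: no iteration, interpolation, or new stability step is needed beyond what is already packaged into Theorem \ref{propLpIt} and Lemma \ref{lemL2S1}. The one place where it is easy to slip is tracking that $\alpha$ carries an extra factor $\gamma$ relative to $1-\rho$, so that each occurrence of $\gamma^{2^i-2}$ in Theorem \ref{propLpIt} is bumped up to $\gamma^{2^i-1}$ in the corollary; verifying this in both the denominators of the product and the threshold condition is the only thing that warrants a moment's care.
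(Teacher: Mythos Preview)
Your proposal is correct and matches the paper's own justification exactly: the corollary is stated as following immediately from Theorem \ref{propLpIt} by inserting the values $\alpha=(1-\rho)\gamma$ and $\beta=\rho$ supplied by Lemma \ref{lemL2S1}. The substitution you carry out, including the conversion of the threshold $\alpha\gamma^{p-2}<1$ into $(1-\rho)\gamma^{p-1}<1$ and the shift of exponents $\gamma^{2^i-2}\mapsto\gamma^{2^i-1}$ in the denominators, is precisely what the paper has in mind.
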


\subsection{Explicit Error Bounds}\label{secExpErrB}
In this section we introduce a further parameter $t_k$ for our transition operators $K_k$ which is thought to be the running time or number of MCMC steps contained in $K_k$. We write $K_k^{t_k}$ in the following to make this dependence clear. Throughout this section, we assume two families of mixing conditions depending on $t_k$, a hypercontractivity inequality and an $L_{2}$-$L_2$ inequality
\begin{ass}\label{mixingt}
There exist positive constants $a_k^*$ and $b_k^*$ such that for all $f\in B(E)$ 
\begin{equation}\label{ineqHyper}
\|K_k^{t_k}(f)\|_{L_{q(p,t_k)}(\mu_k)}\leq \|f\|_{L_{p}(\mu_k)},
\end{equation}
where $q(p,t_k)=1+(p-1)\exp(2 a_k^* t_k)$, and 
\begin{equation}\label{ineqL2}
\|K_k^{t_k}(f)-\mu_k(f)\|^2_{L_{2}(\mu_k)}\leq \exp(-2 b_k^* t_k)\; \|f-\mu_k(f)\|^2_{L_{2}(\mu_k)}.
\end{equation}
\end{ass}
Inequalities of this type follow, respectively, from a Logarithmic Sobolev inequality and a Poincaré inequality for the underlying MCMC dynamics, see, e.g., Deuschel and Stroock \cite{DS90} or Ané et al. \cite{ABCFGMRS00}. Furthermore we assume again boundedness of relative densities, i.e., Assumption \ref{ass1}. All proofs are at the end of the section.\bigskip % and the example of Section \ref{SecRefLangevin}.\bigskip 

We have the following bounds for $q_{j,k}$ provided that the running times $t_j,\ldots,t_k$ are chosen sufficiently large.%\bigskip 

\begin{prop}\label{propcpq}
Fix $0 \leq j <k \leq n$, $\tau \in (0,1)$, $1\leq s\in \mathbb{N}$ and $p=2^s$. Assume that for $j \leq l\leq k$
\begin{equation}\label{ineqtl1}
t_l\geq \frac{1}{2 b_l^*}\Big[ (p-1)\log(\gamma)-\log(1-\tau)\Big].
\end{equation}
Then we have for $f \in B(E)$
\[
\| q_{j,k}(f)\|_{L_p(\mu_j)}\leq \widetilde{c}_{j,k}(p,p) \| f \|_{L_p(\mu_k)}
\]
with
\[
\widetilde{c}_{j,k}(p,p) = \frac{\gamma^{s-1+1/p}}{\tau}.
\]
If in addition we have for $p'>p$ and $j \leq l\leq k$,
\begin{equation}\label{ineqtl2}
t_j \geq \frac{1}{2 a_j^*}\Big[ \log(p'-1)-\log(p-1)\Big],
\end{equation}
then we have for $f \in B(E)$
\[
\| q_{j,k}(f)\|_{L_{p'}(\mu_j)}\leq \widetilde{c}_{j,k}(p',p) \| f \|_{L_{p}(\mu_k)}
\]
with
\[
\widetilde{c}_{j,k}(p',p)= \frac{\gamma^{s-1+1/p}}{\tau} \gamma^{\frac{p'-1}{p'}}.
\]
\end{prop}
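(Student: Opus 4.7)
The plan is to apply the $L_p$-stability machinery of Section \ref{LPglobal} to the iterated kernels $K_l^{t_l}$: condition (\ref{ineqL2}) will supply Assumption \ref{ass2}, and the smallness (\ref{ineqtl1}) will be arranged so that the resulting constant $\alpha$ satisfies $\alpha\gamma^{p-2}\leq 1-\tau$; Theorem \ref{propLpIt} then applies with the clean denominator $\tau$, and the $\hat{q}_{j,k}$ bound is transferred to $q_{j,k}$ by Lemma \ref{lemqhatq}. For the $L_{p'}$-$L_p$ version, the hypercontractivity (\ref{ineqHyper}) is first rewritten as an honest $L_p$-$L_{p'}$ bound for $K_j^{t_j}$, and Corollary \ref{corHypqhat} is then used in place of Theorem \ref{propLpIt}.

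First I would note that (\ref{ineqL2}) is literally Assumption \ref{assPoinc} for the kernel $K_l^{t_l}$ with constant $1-\rho_l=\exp(-2b_l^* t_l)$, since stationarity of $\mu_l$ for $K_l$ makes $\mu_l(K_l^{t_l}f)=\mu_l(f)$ and turns its left-hand side into $\text{Var}_{\mu_l}(K_l^{t_l}f)$. By Lemma \ref{lemL2S1}, each one-step propagator $\hat{q}_{l-1,l}$ therefore satisfies Assumption \ref{ass2} with $\alpha_l=\gamma\exp(-2b_l^*t_l)$ and $\beta_l\leq 1$. Taking $\alpha$ to be the maximum of the $\alpha_l$ over the indices arising in $\hat{q}_{j,k}$, condition (\ref{ineqtl1}) gives $\alpha\gamma^{p-2}\leq 1-\tau$. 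With $p=2^s$ the explicit upper bound in Theorem \ref{propLpIt} then yields
\[
\|\hat{q}_{j,k}(f)\|_{L_p(\mu_j)}\;\leq\;\delta(p)\,\|f\|_{L_p(\mu_k)},\qquad \delta(p)\;\leq\;\frac{\gamma^{s-2+2/p}}{\tau},
\]
where I used $2^{-(s-1)}=2/p$. Lemma \ref{lemqhatq} transfers this to $q_{j,k}$ at the cost of an extra factor $\gamma^{(p-1)/p}$; adding exponents gives $s-2+2/p+(p-1)/p=s-1+1/p$, which is the stated $\widetilde{c}_{j,k}(p,p)$.

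For the $L_{p'}$-$L_p$ bound, the hypercontractivity (\ref{ineqHyper}) applied to $K_j^{t_j}$ reads $\|K_j^{t_j}f\|_{L_{q(p,t_j)}(\mu_j)}\leq \|f\|_{L_p(\mu_j)}$ with $q(p,t_j)=1+(p-1)\exp(2a_j^*t_j)$, and (\ref{ineqtl2}) is precisely $\exp(2a_j^*t_j)\geq(p'-1)/(p-1)$, so $q(p,t_j)\geq p'$. Monotonicity of the $L_r(\mu_j)$-norms in $r$ then shows that one may take $\theta_j(p',p)=1$ in (\ref{hyperc}). Corollary \ref{corHypqhat} supplies an $L_{p'}$-$L_p$ bound for $\hat{q}_{j,k}$ with constant $\gamma^{(p-1)/p}\delta(p)$, and a final application of Lemma \ref{lemqhatq} introduces the factor $\gamma^{(p'-1)/p'}$, delivering $\widetilde{c}_{j,k}(p',p)=\widetilde{c}_{j,k}(p,p)\,\gamma^{(p'-1)/p'}$.

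The one spot requiring care is the uniformity of $\alpha$ in Assumption \ref{ass2}: that assumption asks for a single constant common to every one-step propagator, so the honest choice is $\alpha=\max_l \alpha_l$ over the indices appearing in $\hat{q}_{j,k}$. Condition (\ref{ineqtl1}) is calibrated exactly to keep this worst-case $\alpha$ bounded by $(1-\tau)\gamma^{2-p}$, which is what makes the denominator $1-\alpha\gamma^{p-2}$ in Theorem \ref{propLpIt}'s closed-form bound for $\delta(p)$ at least $\tau$. Beyond that, the work is essentially exponent arithmetic.
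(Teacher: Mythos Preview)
Your proof is correct and follows essentially the same route as the paper's: identify $1-\rho_l=e^{-2b_l^* t_l}$ from (\ref{ineqL2}), use Lemma~\ref{lemL2S1} to obtain Assumption~\ref{ass2} with $\alpha\gamma^{p-2}\leq 1-\tau$, apply the $L_p$-bound of Theorem~\ref{propLpIt} (the paper invokes its repackaged form, Corollary~\ref{corLpIt}, which is the same statement), and finish with Lemma~\ref{lemqhatq} and, for the $L_{p'}$-$L_p$ part, Corollary~\ref{corHypqhat} with $\theta_j(p',p)=1$ from (\ref{ineqHyper}). The exponent arithmetic matches.
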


Here and in the following, it is straightforward to relax the requirement of $p=2^s$, see Corollary \ref{corLp}. The constant $\tau$ controls the contractivity in $L^2$ of the MCMC steps in the following sense: In order to apply our error bounds of Section \ref{secStabFKP}, namely, Corollary \ref{corLpIt}, we need to ensure that
\[
0 < 1- \gamma^{p-1}(1-\rho_l)\stackrel{(\ref{ineqL2})}{=} 1- \gamma^{p-1}e^{-a_l^* t_l}.
\]
$\tau$ is a uniform measure of by how much this inequality is satisfied, i.e., $\tau$ is assumed to be a constant with
\[
\tau < 1- \gamma^{p-1}(1-e^{-a_l^* t_l})\;\;\;\; \text{ for all }\;\; j\leq l \leq k.%\stackrel{(\ref{ineqL2})}{=} 1- \gamma^{p-1}e^{-a_l^* t_l}.
\]
Our next step is to utilize the constants $\widetilde{c}_{j,k}(p',p)$ in order to bound the constants that arise in the error bound of Theorem \ref{thmBound}.\bigskip

\begin{cor}\label{corconstants}
Fix $0 \leq j <k \leq n$, $\tau \in (0,1)$, $2\leq s\in \mathbb{N}$ and $p=2^s$. Assume that for $1 \leq l\leq n$,
\begin{equation}\label{ineqtl1b}
t_l\geq \frac{1}{2b_l^*}\Big[ (2p-1)\log(\gamma)-\log(1-\tau)\Big],
\end{equation}
and
\begin{equation}\label{ineqtl2b}
t_l \geq \frac{1}{2 a_j^*}\Big[ \log(p-1)-\log\left(\frac{p}{2}-1\right)\Big].
\end{equation}
Define
\[
h(p) =\frac{\gamma^{2 s+\frac{1}{p}}}{\tau^2}.%=b_{j,k}(2p,p)^2.
\]
Then we have
\[
c_{j,k}(p) \leq h(p).
\]
Furthermore
\[
\widehat{c}_{k}(p)\leq \overline{c}_{k}(p) \leq ((1+\gamma)\vee 3)\, k \,h(p),\;\;\;\;\;\text{ and }\;\;\;\;\; \widehat{v}_{k}(p)\leq
\overline{v}_{k}(p) \leq (k+1) \frac{\gamma}{\tau^2}.
\]
\end{cor}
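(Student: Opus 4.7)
\emph{Bound on $c_{j,k}(p)$.} I would unpack the definition from Lemma~\ref{cjklemma}, $c_{j,k}(p) = \max(\widetilde{c}_{j,k}(p, p/2),\widetilde{c}_{j,k}(2p, p)^2)$, and apply Proposition~\ref{propcpq} twice. First, I instantiate it with its parameter ``$s$'' set to $s-1$ (legitimate because the hypothesis $s\geq 2$ gives $s-1\geq 1$) and ``$p'$'' set to $p$, obtaining $\widetilde{c}_{j,k}(p, p/2) = \gamma^{s-1+1/p}/\tau$. Second, I instantiate it with ``$s$'' unchanged and ``$p'$'' set to $2p$, obtaining $\widetilde{c}_{j,k}(2p, p) = \gamma^{s+1/(2p)}/\tau$, so $\widetilde{c}_{j,k}(2p, p)^2 = \gamma^{2s+1/p}/\tau^2 = h(p)$. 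Since $\gamma>1$ and $\tau<1$, the ratio of the second to the first is $\gamma^{s+1}/\tau>1$, so $h(p)$ dominates and $c_{j,k}(p)\leq h(p)$.

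\emph{Bound on $\widehat{c}_k(p)$ and $\overline{c}_k(p)$.} Since $K_{j+1}$ is Markov, $q_{j,j+1}(1) = \overline{g}_{j,j+1}$; combining $\overline{g}_{j,j+1}\geq 0$ with Assumption~\ref{ass1} gives $|\overline{g}_{j,j+1}-1|\leq \max(\gamma-1, 1)$ pointwise, and since $\mu_j$ is a probability measure this passes to $\|q_{j,j+1}(1)-1\|_{L_p(\mu_j)}\leq \max(\gamma-1, 1)$. Substituting this together with $c_{j,k}(p)\leq h(p)$ into the definition of $\widehat{c}_k(p)$ yields $\widehat{c}_k(p)\leq k\cdot h(p)\cdot ((1+\gamma)\vee 3)$; the same bound passes to $\overline{c}_k(p) = \max_{l\leq k}\widehat{c}_l(p)$ because $\widehat{c}_l(p)\leq l\cdot h(p)\cdot ((1+\gamma)\vee 3)$ is monotone in $l$.

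\emph{Bound on $\widehat{v}_k(p)$ and $\overline{v}_k(p)$.} For $j=k$ the summand is $\text{Var}_{\mu_k}(f)\leq \|f\|_{L_p(\mu_k)}^2\leq 1$. For $j<k$ I use the factorization $q_{j,k}(f) = \overline{g}_{j,j+1}\,\hat{q}_{j+1,k}(K_k(f))$ recorded in Section~\ref{secStabFK}: bounding one copy of $\overline{g}_{j,j+1}$ by $\gamma$ and absorbing the other via $\mu_j(\overline{g}_{j,j+1}\cdot)=\mu_{j+1}(\cdot)$ gives $\mu_j(q_{j,k}(f)^2)\leq \gamma\,\|\hat{q}_{j+1,k}(K_k(f))\|_{L_2(\mu_{j+1})}^2$. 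The $L_2$-iteration (\ref{L2It2}) of Lemma~\ref{lemL2It} contributes a factor $(1-\alpha)^{-1}\leq 1/\tau$ (under (\ref{ineqtl1b}) one has $\alpha=e^{-2b_l^* t_l}\gamma\leq 1-\tau$), while Jensen together with stationarity of $\mu_k$ under $K_k$ gives $\|K_k(f)\|_{L_2(\mu_k)}^2\leq \|f\|_{L_p(\mu_k)}^2\leq 1$. Hence $\text{Var}_{\mu_j}(q_{j,k}(f))\leq \gamma/\tau$, and summing over $j=0,\ldots,k$ together with $\tau<1$ yields $\widehat{v}_k(p)\leq 1+k\gamma/\tau\leq (k+1)\gamma/\tau^2$; the same bound transfers to $\overline{v}_k(p)$ by monotonicity.

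\emph{Main obstacle.} The calculations themselves are elementary. The subtle point, and what I expect to require the most care, is to verify that the single pair of hypotheses (\ref{ineqtl1b})--(\ref{ineqtl2b}) is strong enough for all of the invocations above: the exponent $(2p-1)$ in (\ref{ineqtl1b}) must dominate the $(p-1)$ needed for $\widetilde{c}_{j,k}(2p, p)$, the $(p/2-1)$ needed for $\widetilde{c}_{j,k}(p, p/2)$, and the $1$ needed for $\alpha\leq 1-\tau$ in Lemma~\ref{lemL2It}; and the gap $\log(p-1)-\log(p/2-1)$ in (\ref{ineqtl2b}) must dominate $\log(2p-1)-\log(p-1)$, which reduces to $2/(p-2)\geq 1/(p-1)$ for $p\geq 4$. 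Once these comparisons are in place, the proof reduces to the routine calculations sketched above.
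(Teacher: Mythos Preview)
Your proposal is correct and follows essentially the same route as the paper: apply Proposition~\ref{propcpq} twice (with parameters $(p,p/2)$ and $(2p,p)$) to bound $c_{j,k}(p)$, control $\|q_{j,j+1}(1)-1\|_{L_p(\mu_j)}$ via the pointwise estimate $|\overline{g}_{j,j+1}-1|\le\max(\gamma-1,1)$, and verify that the single pair of running-time hypotheses dominates all the instances required---the paper phrases the last check as ``$(2x-1)/(x-1)$ is decreasing in $x$'', which is exactly your inequality $2/(p-2)\ge 1/(p-1)$.

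The only notable difference is in the bound for $\widehat{v}_k(p)$. The paper reuses Proposition~\ref{propcpq} once more with $s=1$ to get $\widetilde{c}_{j,k}(2,2)=\sqrt{\gamma}/\tau$ and hence $\gamma/\tau^2$ per summand. You instead unpack that step directly, using the factorization $q_{j,k}=\overline{g}_{j,j+1}\,\hat{q}_{j+1,k}K_k$ together with~(\ref{L2It2}) and $1-\alpha\ge\tau$, which gives the slightly sharper per-term estimate $\gamma/\tau$. Both routes yield the stated bound $(k+1)\gamma/\tau^2$; yours is marginally more economical since it avoids passing through the cruder upper bound for $\delta(2)$ used in Proposition~\ref{propcpq}.
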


Adding the final observations that for $f\in B(E)$ and $0\leq j < n$,
\[
\text{Var}_{\mu_j}(q_{j,n}(f)) \leq b_{j,n}(2,2)^2 \|f\|^2_{L_p(\mu_n)}
\]
and
\[
\text{Var}_{\mu_n}(q_{n,n}(f)) \leq \|f\|^2_{L_p(\mu_n)},
\]
we are now in the position to bound all the terms in the error bound of Theorem \ref{thmBound} through $\gamma$, $\tau$, $p$, $N$ and $n$. Thus we arrive at the following version of the corollary:\bigskip

\begin{cor}\label{corErrorExplicit}%[Theorem \ref{thmBound}']
Fix $0<n\in \mathbb{N}$, $\tau \in (0,1)$, $2\leq s\in \mathbb{N}$ and $p=2^s$. Assume that for $1 \leq l\leq n$
\[
t_l\geq \frac{1}{2 b_l^*}\Big[ (2p-1)\log(\gamma)-\log(1-\tau)\Big]
\]
and
\[
t_l \geq \frac{1}{2 a_l^*}\Big[ \log(p-1)-\log\left(\frac{p}{2}-1\right)\Big].
\]
Then for $f\in B(E)$ we have
\begin{eqnarray}
&&N \mathbb{E}[|\nu_n^N(f)-\mu_n(f)|^2] \nonumber\\\nonumber\\
&&\leq \|f\|^2_{L_p(\mu_n)}\left[
1+n \gamma \tau^{-2}+n ((1+\gamma)\vee 3)\gamma^{2s+p^{-1}}\tau^{-2}\overline{\varepsilon}^{N,p}_n
\right].\nonumber
\end{eqnarray}
If in addition 
\begin{equation}\label{bdN}
N \geq 2 ((1+\gamma)\vee 3) n \gamma^{2s+p^{-1}}\tau^{-2},
\end{equation}
then we have
\begin{eqnarray}
\overline{\varepsilon}^{N,p}_n \leq
2 \frac{1+n \gamma \tau^{-2}}{N}.
\nonumber
\end{eqnarray}
\end{cor}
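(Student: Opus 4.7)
The plan is to bolt the constant estimates of Corollary~\ref{corconstants} onto the two inequalities of Theorem~\ref{thmBound}, refining the variance sum via the two ``final observations'' stated between Corollary~\ref{corconstants} and the statement above. The running-time assumptions here are precisely those of Corollary~\ref{corconstants}, so the bounds
\[
\widehat{c}_n \le \overline{c}_n \le ((1+\gamma)\vee 3)\,n\,h(p), \qquad \widehat{v}_n \le \overline{v}_n \le (n+1)\gamma/\tau^2
\]
are available for free, and it remains only to plug them in at the right places.

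For the first inequality I would apply (\ref{thmf1c}) of Theorem~\ref{thmBound} and handle the variance sum by splitting off the endpoint $j=n$. For $j=n$, Jensen's inequality (valid since $p\ge 2$) yields $\text{Var}_{\mu_n}(q_{n,n}(f)) = \text{Var}_{\mu_n}(f) \le \mu_n(f^2) \le \|f\|_{L_p(\mu_n)}^2$. For $0\le j<n$, the first observation preceding the statement together with Proposition~\ref{propcpq} gives $\text{Var}_{\mu_j}(q_{j,n}(f)) \le (\gamma/\tau^2)\|f\|_{L_p(\mu_n)}^2$. Summing these gives $\sum_{j=0}^n \text{Var}_{\mu_j}(q_{j,n}(f)) \le (1 + n\gamma/\tau^2)\|f\|_{L_p(\mu_n)}^2$, and inserting the $\widehat{c}_n$-bound above produces exactly the first assertion.

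For the second inequality, observe that hypothesis (\ref{bdN}) together with the $\overline{c}_n$-bound just quoted yields $N\ge 2\overline{c}_n$, so (\ref{thmf2c}) of Theorem~\ref{thmBound} applies and delivers $\overline{\varepsilon}_n^N \le 2\overline{v}_n/N$. Running the same endpoint-splitting argument uniformly over $\|f\|_{L_p(\mu_k)}\le 1$ and over levels $k\le n$ then gives the sharper bound $\overline{v}_n \le 1 + n\gamma/\tau^2$, and the second assertion follows.

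The only real difficulty is this last piece of bookkeeping: the generic $\overline{v}_n$-estimate from Corollary~\ref{corconstants} would only give the slightly weaker constant $(n+1)\gamma/\tau^2$ on the right-hand side, which is of the same order but strictly larger (since $\gamma/\tau^2\ge 1$) than the stated $1+n\gamma/\tau^2$. Repeating the endpoint splitting inside the supremum defining $\overline{v}_n$ is what lands the precise constant in the statement; everything else is direct substitution.
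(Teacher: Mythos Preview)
Your proposal is correct and matches the paper's approach. The paper does not give a separate formal proof of this corollary; instead, the text immediately preceding the statement indicates that it follows by plugging the constants from Corollary~\ref{corconstants} and the two ``final observations'' on $\text{Var}_{\mu_j}(q_{j,n}(f))$ into Theorem~\ref{thmBound}, which is exactly what you do. Your remark that the generic bound $\overline{v}_n\le (n+1)\gamma/\tau^2$ from Corollary~\ref{corconstants} must be sharpened to $1+n\gamma/\tau^2$ via the endpoint-splitting observations is precisely the point of those observations in the paper.
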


Finally, for the sake of illustration we also state these bounds for a concrete choice of parameters, namely $\gamma=2$, $\tau=0.8$, $p=4$ and thus $s=2$. After rounding the coefficients to improve readability (in a way that makes the inequality slightly worse) and inserting the bound on $\overline{\varepsilon}^{N,p}_n$, this yields the bound
\[
\mathbb{E}[|\nu_n^N(f)-\mu_n(f)|^2] \; \leq \; \|f\|^2_{L_4(\mu_n)}\left[
\frac{1+4 n}{N} +
\frac{180 n + 560 n^2}{N^2}
\right].
\]
The required lower bound (\ref{bdN}) on $N$ is given by $N \geq 180$ in this case.

\begin{proof}[Proof of Proposition \ref{propcpq}]
Comparing inequalities (\ref{rhobd}) and (\ref{ineqL2}) shows that the terms $\exp(-2 b_l^* t)$ play the role of $(1-\rho)$ in the setting of Section \ref{secStabFKP}. Thus, assuming for all $j\leq l \leq k$
\[
1-e^{-2b_l^* t_l}\gamma^{p-1}>\tau
\] 
or, equivalently, (\ref{ineqtl1}) ensures that we can apply Corollary \ref{corLpIt} to obtain the bound
\begin{equation}\label{ineqqhatpp}
\| \hat{q}_{j,k}(f)\|_{L_p(\mu_j)}\leq \delta(p) \| f \|_{L_p(\mu_k)}
\end{equation}
with
\[
\delta(p) = \frac{\gamma^{s-2+2/p}}{\tau}. 
\]
Now applying Lemma \ref{lemqhatq} allows to conclude from this bound for $\hat{q}_{j,k}$ the desired $L_p$-$L_p$-bound for $q_{j,k}$ with
\[
\widetilde{c}(p,p) = \frac{\gamma^{s-2+2/p}}{\tau}\, \gamma^{\frac{p-1}{p}}= \frac{\gamma^{s-1+1/p}}{\tau}. 
\]
We next turn to the $L_{p'}$-$L_p$-bound. By (\ref{ineqHyper}), ensuring 
\[
p'\leq 1+(p-1)e^{2 a_j^* t}
\]
or, equivalently, (\ref{ineqtl2}) is a sufficient condition for
\[
\|K_j^{t_j}(f)\|_{L_{p'}(\mu_j)}\leq \|f\|_{L_{p}(\mu_j)}.
\]
Thus we conclude from applying first Corollary \ref{corHypqhat} and then Lemma \ref{lemqhatq} to (\ref{ineqqhatpp}) the desired $L_{p'}$-$L_p$-bound for $q_{j,k}$ with
\[
\widetilde{c}_{j,k}(p',p) = \frac{\gamma^{s-1+1/p}}{\tau}\, \gamma^{\frac{p-1}{p}}\, \gamma^{\frac{p'-1}{p'}} =\frac{\gamma^{s-1+1/p}}{\tau}\, \gamma^{\frac{p'-1}{p'}}.
\]
\end{proof}

\begin{proof}[Proof of Corollary \ref{corconstants}]
Choose $\widetilde{c}_{j,k}(p',p)$ as in Proposition \ref{propcpq}. Since
\[
c_{j,k}(p)=\max\left( 1,\widetilde{c}_{j,k}\left(p,\frac{p}2\right),\widetilde{c}_{j,k}(2,p)^2\right),
\]
by Lemma \ref{cjklemma}, we need to apply Proposition \ref{propcpq} for $(p,p/2)$ and $(2p,p)$. Note that if inequality (\ref{ineqtl1}) holds for $2p$ it also holds for $p$. Conversely, if (\ref{ineqtl2}) holds for $(p,p/2)$ it also holds for $(2p,p)$ since
\[ \frac{2x-1}{x-1}=2+ \frac{1}{x-1}\]
is decreasing in $x$. This motivates our assumption of (\ref{ineqtl1b}) and (\ref{ineqtl2b}). Now observe that
\[
1\leq \widetilde{c}_{j,k}\left(p,\frac{p}2\right)\leq  \widetilde{c}_{j,k}(2p,p)^2= \frac{\gamma^{2s+\frac1p}}{\tau^2}=h(p)
\]
and thus we have
\[
c_{j,k}(p)\leq h(p).
\]
Since for all $x\in E$ we have
\[
-1 \leq q_{j,j+1}(1)(x)-1=\overline{g}_{j,j+1}(x)-1 \leq \gamma-1,
\]
it follows that
\[
2+\|q_{j,j+1}(1)-1\|_{L_p(\mu_j)} \leq ((1+\gamma)\vee 3)
\]
and we can bound $\widehat{c}_k(p)$ as follows:
\[
\widehat{c}_k(p)= \sum_{j=0}^{k-1}c_{j,k}(p) (2+\|q_{j,j+1}(1)(x)-1\|_{L_p(\mu_j)})\leq ((1+\gamma)\vee 3)\, h(p)\, k.
\]
Since this upper bound is monotonically increasing in $k$, it also applies to $\overline{c}_k(p)$. We now turn to $\widehat{v}_k(p)$. Observe that we have
\begin{eqnarray}
\widehat{v}_k(p)&=&\sup\left\{\left. \sum\nolimits_{j=0}^k \text{Var}_{\mu_j}(q_{j,k}(f)) \right| f\in B(E), \|f\|_{L_p(\mu_k)}\leq 1
\right\}\nonumber\\
&=&\sup\left\{\left. \sum\nolimits_{j=0}^k \mu_j(q_{j,k}(f)^2) \right| f\in B(E), \|f\|_{L_2(\mu_k)}\leq 1
\right\}\nonumber\\
&\leq& \sum_{j=0}^k \widetilde{c}_{j,k}(2,2)^2.\nonumber
\end{eqnarray}
As we have
\[
\widetilde{c}_{j,k}(2,2)=\frac{\sqrt{\gamma}}{\tau},
\]
we get the desired upper bound on $\widehat{v}_k(p)$. Since this upper bound is increasing in $k$, it also applies to $\overline{v}_k(p)$.
\end{proof}

\section{Dimension Dependence}\label{secDD}

A particular advantage of our error bounds is that they allow for deriving the algorithm's dimension dependence fairly explicitly for the case where the measures $\mu_k$ are product measures on $\mathbb{R}^d$. This can be seen as a first step to understanding the algorithm's overall dimension dependence.\bigskip 

We demonstrate that if we have a one-dimensional setting where our bounds apply, then we can obtain bounds of the same order for the $d$-dimensional product of the one-dimensional target distribution by increasing the computational effort by a factor of order $O(d^3)$ and thus by a factor which is polynomial in $d$. Consider a sequence of distributions $\mu_0$,$\ldots$, $\mu_n$ on $\mathbb{R}$ such that relative densities are bounded by $\gamma$ and such that we can obtain sufficiently good mixing properties for the dynamics $K_k$ from Logarithmic Sobolev inequalities. The constants in Logarithmic Sobolev inequalities are not dimension-dependent for product measures, see Ané et al. \cite{ABCFGMRS00}. Thus, we obtain the same constants in the mixing conditions for the $d$-dimensional product dynamics $K_k^{(d)}$ with target $\mu_k^{\otimes d}$ as for the one-dimensional dynamics $K_k$. Since the $d$-dimensional relative densities $\overline{g}_{k,k+1}^{(d)}$ are $d$-fold products of the one-dimensional densities $\overline{g}_{k,k+1}$, we need to increase the number of interpolating distributions by a factor $d$ when switching from $\mathbb{R}$ to $\mathbb{R}^d$. This can be done by inserting $d-1$ additional distributions $\mu_{k,1}^{\otimes d}$ to $\mu_{k,d-1}^{\otimes d}$ between $\mu_k^{\otimes d}$  and $\mu_{k+1}^{\otimes d}$ where, for $1 \leq j < d$, $\mu_{k,j}^{\otimes d}$ is given by
\[
\mu_{k,j}^{\otimes d}(dx_1,\ldots, dx_d) = \left( \prod_{l=1}^{d} \overline{g}_{k,k+1}(x_l)
\right)^{j\,d^{-1}} \mu_{k}^{\otimes d}(dx_1,\ldots, dx_d).
\]
Note that $\mu_{k,0}^{\otimes d}=\mu_{k}^{\otimes d}$ and $\mu_{k,d}^{\otimes d}=\mu_{k+1}^{\otimes d}$. Moreover, the relative densities between $\mu_{k,j}^{\otimes d}$ and $\mu_{k,j+1}^{\otimes d}$ are bounded by $\gamma$.  Assume furthermore that the Logarithmic Sobolev constant for the $d-1$ interpolating measures ``inserted'' between $\mu_k$ and $\mu_{k+1}$ lies in between the Logarithmic Sobolev constants for $\mu_k^{(d)}$ and $\mu_{k+1}^{(d)}$. Thus MCMC with respect to the inserted distributions is not more difficult than MCMC with respect to the original distributions. Now re-index the sequence of $n^{(d)}=n \cdot d +1$ product measures on $\mathbb{R}^d$ to $\mu_0^{(d)},\ldots \mu_{n_d}^{(d)}$ and denote the associated transition kernels and relative densities by $K_k^{(d)}$ and $\overline{g}_{k,k+1}^{(d)}$. Then, we obtain dimension-independent constants $c_{j,k}(p)$ in the inequalities (\ref{cjninequality}) for the propagator $q_{j,k}^{(d)}$ derived from $K_k^{(d)}$ and $\overline{g}_{k,k+1}^{(d)}$. (Recall that our bounds on the constants $c_{j,k}$ from Section \ref{LPglobal} were independent of $j$ and $k$.) Notably, these constants coincide with those derived for the original one-dimensional sequence $\mu_0,\ldots,\mu_n$ and we have $\mu_n^{\otimes d}=\mu_{n_d}^{(d)}$.\bigskip 

Overall, we then find that the algorithm's error depends on the dimension like $O(d^3)$ in this example: We need one factor $d$ since each step of the product dynamics is more costly to simulate, one factor $d$ due to the fact that we increase the number of levels by a factor $d$, and one factor $d$ because we have to adjust the number of particles $N$ by the same factor as the number of levels $n$ as can be seen from Theorem \ref{thmBound}. It is an open question whether similar results can be derived from minorization and drift conditions as in Whiteley \cite{W11}.

\end{document}